\documentclass[11pt]{amsart}

\usepackage{fullpage}

\setlength\marginparwidth{2cm}

\usepackage[usenames,dvipsnames,svgnames,table]{xcolor}

\usepackage{amsmath, amssymb, mathrsfs}


\usepackage{tikz}
 \usetikzlibrary{cd}
 \tikzset{
  symbol/.style={
    draw=none,
    every to/.append style={
      edge node={node [sloped, allow upside down, auto=false]{$#1$}}}
      }
      }
 \usetikzlibrary{math}

\usepackage{amsthm}
	\theoremstyle{definition} 
	\newtheorem{definition}{Definition}[section]
	
	\theoremstyle{plain} 
	\newtheorem{theorem}[definition]{Theorem}
	\newtheorem{theorem*}{Theorem} 
	\newtheorem{lemma}[definition]{Lemma}
	\newtheorem{proposition}[definition]{Proposition}
	\newtheorem{corollary}[definition]{Corollary}
	
	\theoremstyle{remark} 
	\newtheorem{remark}[definition]{Remark}
        
            
\usepackage{enumitem}
\usepackage[colorinlistoftodos]{todonotes}


\newcommand{\CC}{\mathbb C}

\newcommand{\QQ}{{\mathbb Q}}
\newcommand{\RR}{{\mathbb R}}

\newcommand{\ZZ}{{\mathbb Z}}


\newcommand{\cF}{{\mathcal F}}
\newcommand{\cG}{{\mathcal G}}

\newcommand{\cO}{{\mathcal O}}
\newcommand{\cP}{{\mathcal P}}

\newcommand{\cS}{{\mathcal S}}

\newcommand{\cU}{{\mathcal U}}
\newcommand{\cV}{{\mathcal V}}

\newcommand{\cY}{{\mathcal Y}}


\newcommand{\frakS}{\mathfrak S}

\newcommand{\frakY}{\mathfrak Y}

\newcommand{\frakc}{\mathfrak c}

\newcommand{\frakp}{\mathfrak p}



\newcommand{\Qbar}{\overline{\QQ}}

\newcommand{\Qpbar}{\Qbar_p}


\usepackage{colonequals}
\newcommand{\defeq}{\colonequals} 

\newcommand{\isom}{\cong} 

\newcommand{\inj}{\hookrightarrow}

\newcommand{\tensor}{\otimes} 
\newcommand{\Directsum}{\bigoplus} 

\DeclareMathOperator\Hom{Hom} 
\DeclareMathOperator\End{End} 
\DeclareMathOperator\GL{GL} 

\DeclareMathOperator\Spec{Spec} 



\newcommand{\univ}{\mathrm{univ}} 

\DeclareMathOperator\Res{Res} 

\newcommand{\pr}{\mathrm{pr}} 

\DeclareMathOperator\val{val} 
\DeclareMathOperator\Gal{Gal} 
\DeclareMathOperator\Tr{Tr} 
\let\det\relax
\DeclareMathOperator{\det}{det} 



\DeclareMathOperator\Fitt{Fitt}

\usepackage[OT2,T1]{fontenc}
\DeclareSymbolFont{cyrletters}{OT2}{wncyr}{m}{n}
\DeclareMathSymbol{\Sha}{\mathalpha}{cyrletters}{"58} 

\DeclareRobustCommand{\SkipTocEntry}[5]{} 
\usepackage[bookmarks=false, 
        colorlinks, 
        linkcolor=red, 
        anchorcolor=red, 
        citecolor=red, 
        urlcolor=red] 
        {hyperref}  

\newcounter{counter}
\setcounter{counter}{0}



\begin{document}
\title{Partial Classicality of Hilbert Modular Forms}
\author{Chi-Yun Hsu}
\address{Department of Mathematics, University of California, Los Angeles, Los Angeles, CA 90095, USA}
\email{cyhsu@math.ucla.edu}
\date{\today}


\begin{abstract}

Let $F$ be a totally real field and $p$ a rational prime unramified in $F$. We prove a partial classicality theorem for overconvergent Hilbert modular forms: when the slope is small compared to a subset of weights, an overconvergent form is partially classical.
We use the method of analytic continuation.
 
\end{abstract}

\maketitle

\tableofcontents


\section{Introduction}
Coleman \cite{Col96} proved that a $p$-adic overconvergent modular form of weight $k\in \ZZ$ must be classical if its slope, i.e., the $p$-adic valuation of the $U_p$-eigenvalue, is less than $k-1$.
His proof involves analyzing the rigid cohomology of modular curves.
On the other hand, Buzzard \cite{Buz} and Kassaei \cite{Kas_an} developed the alternate method of analytic continuation to prove classicality theorems.
The key is to understand the dynamic of the $U_p$ Hecke operator.

Let $F$ be a totally real field of degree $g$ over $\QQ$.
In the situation of Hilbert modular forms associated to $F$, many results about classicality are also known. 
Coleman's cohomological method was developed by Tian--Xiao \cite{TX16} to prove a classicality theorem, assuming $p$ is unramified in $F$.
The method of analytic continuation was worked out first in the case when $p$ splits completely in $F$ by Sasaki \cite{Sasaki}, then in the case when $p$ is unramified by Kassaei \cite{Kas} and Pilloni--Stroh \cite{PS}, and finally when $p$ is allowed to be ramified by Bijakowski \cite{Bij}.

Let $\Sigma$ be the set of archimedean embeddings of $F$, which we identify with the set of $p$-adic embeddings of $F$ through some fixed isomorphism $\CC\isom \Qpbar$.
For each prime $\frakp$ of $F$ above $p$, denote by $\Sigma_\frakp\subseteq \Sigma$ the subset of $p$-adic embeddings inducing $\frakp$.
Let $e_\frakp$ be the ramification index, and $f_\frakp$ the residue degree of $\frakp$.
Then the classicality theorem for overconvergent Hilbert modular forms proved by analytic continuation is as follows.

\begin{theorem*}[Bijakowski]
Let $f$ be an overconvergent Hilbert modular form of weight $\underline{k}\in \ZZ^{\Sigma}\isom \ZZ^g$.
Assume that for all $\frakp\mid p$, $U_\frakp(f) = a_\frakp f$ such that 
\[
\val_p(a_\frakp) < \frac1{e_\frakp}\inf_{\tau\in  \Sigma_\frakp} \{k_\tau\}-f_\frakp,
\]
where $\val_p$ is the $p$-adic valuation normalized so that $\val_p(p)=1$.
Then $f$ is classical.
\end{theorem*}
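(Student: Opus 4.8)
The plan is to prove classicality by \emph{analytic continuation}, following Buzzard and Kassaei. Write $X$ for a toroidal compactification of the Hilbert modular variety carrying the relevant level at $p$ (Iwahori, so that the operators $U_\frakp$ are available), and $\omega^{\underline{k}}$ for the automorphic line bundle whose global sections over $X$ are the classical forms of weight $\underline{k}$; an overconvergent form is, by definition, a section of $\omega^{\underline{k}}$ over some strict neighborhood of the multiplicative (ordinary-at-$p$) locus. Since $X$ is proper, rigid GAGA shows that any section of $\omega^{\underline{k}}$ over the whole rigid space $X^{\an}$ is algebraic, hence classical (the Koecher principle takes care of the boundary). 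So the entire task is to enlarge the domain of definition of $f$, step by step, from its initial strict neighborhood until it is all of $X^{\an}$.

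The first step is the \emph{automatic} analytic continuation, using only that $a_\frakp \neq 0$ for all $\frakp$ (no slope bound): I would extend $f$ to the \emph{canonical locus} $X^{\mathrm{can}}$, the locus where, for every $\frakp \mid p$, the partial canonical subgroup at $\frakp$ exists and sits in the position relative to the Iwahori structure singled out by $U_\frakp$. The mechanism: in each fiber of the $U_\frakp$-correspondence, the non-canonical terms of $U_\frakp f$ evaluate $f$ strictly closer to the multiplicative locus, where it is already known, so one can solve the eigenvalue equation $U_\frakp f = a_\frakp f$ for the remaining canonical-quotient term; this enlarges the domain by one ``step'' towards the supersingular locus, and iterating over all $n$ (and all $\frakp$) sweeps out $X^{\mathrm{can}}$. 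When $p$ is ramified this step rests on the theory of canonical subgroups for the $\cO_F \tensor \ZZ_p$-divisible group, which is why $e_\frakp$ intervenes.

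The core of the proof is to continue $f$ across the strata of $X^{\an}\setminus X^{\mathrm{can}}$ where canonical subgroups degenerate. Organize these strata by their supersingularity profile $(v_\frakp)_{\frakp \mid p}$. On a neighborhood of such a stratum, quotienting by a suitable tower of subgroups at the bad primes $\frakp$ carries a point back into $X^{\mathrm{can}}$, so one obtains a candidate value of $f$ there as a series built from iterating the non-canonical part of $U_\frakp$ against $a_\frakp^{-1}$. The decisive point is convergence of this series: on sections of $\omega^{\underline{k}}$ the operator $U_\frakp$ carries an integral normalization factor coming from the action of the $\frakp$-isogeny on the conormal sheaf twisted by the weight, and the non-canonical part of $U_\frakp$ turns out to have operator valuation exactly $\tfrac1{e_\frakp}\inf_{\tau \in \Sigma_\frakp}\{k_\tau\} - f_\frakp$ (the $-f_\frakp$ reflecting the $p$-degree of the $\frakp$-isogeny, the $\tfrac1{e_\frakp}$ the ramification of $\frakp$). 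The hypothesis $\val_p(a_\frakp) < \tfrac1{e_\frakp}\inf_{\tau\in\Sigma_\frakp}\{k_\tau\}-f_\frakp$ says precisely that $a_\frakp^{-1}$ times this part is contracting, which yields convergence; carrying this out for every $\frakp \mid p$ extends $f$ to all of $X^{\an}$.

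It remains to \emph{glue}: the section just produced near each supersingular stratum must agree, on its overlap with $X^{\mathrm{can}}$, with the automatically continued $f$, so that all local pieces patch to a single global section of $\omega^{\underline{k}}$ on $X^{\an}$. This is Kassaei's gluing argument: on each overlap both sections satisfy the same equation $U_\frakp(\,\cdot\,)=a_\frakp(\,\cdot\,)$ and agree on a dense subdomain, and a uniqueness-of-analytic-continuation argument on the irreducible components of the ambient tubes forces equality. I expect the main obstacle to lie here, together with the preceding convergence estimates: one needs the geometry of the Iwahori-level Hilbert modular variety and of its stratification by supersingularity profile understood well enough to know that these overlaps are nonempty and connected in the way required, and --- with $p$ possibly ramified and several primes above $p$ --- it is the bookkeeping of the stratified dynamics of the operators $\{U_\frakp\}$, rather than any isolated inequality, that is the genuinely delicate part. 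Once the global section is constructed, properness and GAGA conclude that $f$ is classical.
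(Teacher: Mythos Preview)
The theorem you are addressing is quoted in the paper's introduction as Bijakowski's result, cited from \cite{Bij}; the paper does not give its own proof of it. The paper's contribution is the partial classicality Theorem~\ref{thm:partial_classicality} (for $p$ unramified and a subset $I\subseteq\Sigma$), which at $I=\Sigma$ recovers only Kassaei's unramified case. So there is no proof of this particular statement in the paper to compare against.

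That said, your outline is a reasonable high-level description of the analytic-continuation method, and it matches the architecture the paper uses for Theorem~\ref{thm:partial_classicality}: an automatic continuation step from finite slope alone (Lemma~\ref{lem:loc_u}), a decomposition $U_p=U_p^{sp}+U_p^{nsp}$ near the bad vertices, a series $F_m=\sum_{j=0}^{m-1}a_p^{-(j+1)}U_p^{nsp}(U_p^{sp})^{j}f$, and a gluing step. Two differences are worth flagging. First, the paper organizes regions by (twisted) directional degree functions rather than in ``canonical locus'' language; this is largely cosmetic at $I=\Sigma$ but is the point for partial $I$. Second, you assign the key norm estimate to the \emph{non-canonical} piece, whereas in the paper it is the \emph{special} piece $U_p^{sp}$ that is iterated and controlled (Lemma~\ref{lem:keynormest}): one shows $|U_p^{sp}h|\le p^{\,g-\sum_{\tau\in I}k_\tau(1-\epsilon)}|h|$, so the small-slope hypothesis makes $a_p^{-1}U_p^{sp}$ contracting and forces convergence of $F_m$. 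Finally, the gluing in the paper is not via a density/uniqueness argument but via explicit congruences: the estimates of Proposition~\ref{prop:norm} give $F_m\equiv f_m$ modulo growing powers of $p$ on overlaps, and the limit of the glued $h_m$ furnishes the extension.
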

\begin{remark}
When $p$ is unramified in $F$, namely $e_\frakp=1$ for all $\frakp\mid p$, Tian--Xiao proved the classicality theorem with weaker slope assumption: $\val_p(a_\frakp) < \inf_{\tau\in  \Sigma_\frakp} \{k_\tau\}-1$. This slope bound is believed to be optimal \cite[Proposition 4.3]{Breuil}.

\end{remark}

In this paper, we prove some ``partial'' classicality theorems for overconvergent Hilbert modular forms.
Let $I$ be a subset of $\Sigma$.
Breuil defined the notion of \emph{$I$-classical} overconvergent Hilbert modular forms (see \cite[p.~3]{Breuil} or Definition \ref{def:I-cl}).
When $I=\varnothing$, they are the usual overconvergent forms; when $I=\Sigma$, they are the classical forms.

\begin{theorem*}[Theorem~\ref{thm:partial_classicality}] \label{thm:main}
Assume that $p$ is unramified in $F$.
Let $f$ be an overconvergent Hilbert modular form of weight $\underline{k}\in \ZZ^{\Sigma}$.
Let $I\subseteq \Sigma$.
Assume that for all $\frakp\mid p$, $U_\frakp(f) = a_\frakp f$ such that 
\[
\val_p(a_\frakp) < \inf_{\tau\in I\cap \Sigma_\frakp} \{k_\tau\}-f_\frakp.
\]
Then $f$ is $I$-classical.
\end{theorem*}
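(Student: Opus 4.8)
The plan is to argue by analytic continuation, in the style of Kassaei, Pilloni--Stroh and Bijakowski, but arranged so that the overconvergent form is propagated only along the embeddings in $I$. One works on the Hilbert modular variety with Iwahori level at $p$, together with a toroidal compactification; each point carries, for every prime $\frakp\mid p$, a subgroup $H_\frakp\subset A[\frakp]$, and the overconvergent locus is a strict neighbourhood of the locus where $H_\frakp$ is the canonical subgroup in every direction $\tau\in\Sigma_\frakp$. Unwinding Definition~\ref{def:I-cl}, an overconvergent form is $I$-classical if and only if it extends to a strict neighbourhood of the larger locus $\cX_I$ on which, for each $\frakp$, $H_\frakp$ is canonical in the directions $\tau\in\Sigma_\frakp\setminus I$ and is otherwise unconstrained; the theorem thus amounts to the assertion that $f$ continues analytically from the overconvergent locus over $\cX_I$. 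Since the operators $U_\frakp$ for distinct $\frakp$ commute and act through disjoint blocks of embeddings, $\cX_I$ is the intersection over $\frakp$ of the corresponding conditions and one may continue one prime at a time; I therefore fix $\frakp$, put $d=f_\frakp$ and $J=I\cap\Sigma_\frakp$, and write $\cX_J$ for the locus where $H_\frakp$ is canonical in the directions away from $J$. (Taking $J=\Sigma_\frakp$ for every $\frakp$ would recover Bijakowski's classicality theorem in the unramified case $e_\frakp=1$.)

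Since $p$ is unramified, the local model at $\frakp$ is smooth, and the degeneracy of the reduction is governed by the $d$ partial Hasse invariants $h_\tau$ ($\tau\in\Sigma_\frakp$), which the Frobenius $\Fr$ permutes cyclically. To a point of $\cX_J$ I would attach, in each direction $\tau\in\Sigma_\frakp$, the truncated valuation of $h_\tau$ together with the position of $H_\frakp$ relative to the canonical filtration of the $\tau$-graded piece; this gives a finite locally closed decomposition of $\cX_J$ whose deepest stratum is the overconvergent locus and whose strata are indexed by the subsets of $J$ recording the directions in which $H_\frakp$ has become anti-canonical.

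The heart of the matter is the behaviour of the $U_\frakp$-correspondence on this decomposition. Writing $U_\frakp$ as a sum of its $p^{d}$ branches, indexed by the position of the auxiliary subgroup $L\subset A[\frakp]$, there is a distinguished branch along which $U_\frakp$ contracts towards the overconvergent locus, whereas along each remaining anti-canonical branch --- on the region where the partial canonical subgroup in the directions $J$ still exists --- pullback of a weight-$\underline k$ section multiplies supremum norms by at most $|p|^{\,\inf_{\tau\in J}\{k_\tau\}-d}$, the exponent combining the weight contributed by the $J$-directions with the unavoidable loss of one unit at each of the $d$ steps of the Frobenius cycle. The slope hypothesis $\val_p(a_\frakp)<\inf_{\tau\in J}\{k_\tau\}-d$ says precisely that $|a_\frakp|^{-1}|p|^{\,\inf_{\tau\in J}\{k_\tau\}-d}<1$. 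One then runs Kassaei's glueing argument: the forms $a_\frakp^{-n}U_\frakp^{\,n}f$, initially defined only on the overconvergent locus, are shown to extend over $\cX_J$ and to converge there, their anti-canonical contributions being $O\bigl((|a_\frakp|^{-1}|p|^{\,\inf_{\tau\in J}\{k_\tau\}-d})^{n}\bigr)$ and hence tending to $0$, while the canonical contribution is controlled by the contraction. The limit is a section over $\cX_J$ restricting to $f$ on the overconvergent locus; performing this successively for all primes $\frakp\mid p$, and extending over the boundary of the compactification by a Koecher-type principle, one obtains a section over $\cX_I$ restricting to $f$, which exhibits $f$ as $I$-classical.

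The principal obstacle is the norm estimate invoked above. It rests, first, on a theory of \emph{partial} canonical subgroups attached to an arbitrary subset $J\subseteq\Sigma_\frakp$ --- which, unlike the classical level-$n$ canonical subgroup, need not be stable under the Frobenius permutation of $\Sigma_\frakp$ --- together with sharp control of the region on which such a subgroup exists; and, second, on a careful bookkeeping of how the truncated valuations of the $h_\tau$ and the position of $H_\frakp$ transform under the several branches of $U_\frakp$, so as both to delimit the region on which the anti-canonical branches strictly decrease norms and to extract the exponent $\inf_{\tau\in J}\{k_\tau\}-d$. The remaining issues --- that the region reached by the continuation is cofinal among the neighbourhoods entering Definition~\ref{def:I-cl}, that the continuation is compatible with the boundary charts, and a final rigid-analytic extension across the thin locus where all anti-canonical branches degenerate simultaneously --- are technical but routine when $p$ is unramified.
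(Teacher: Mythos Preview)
Your overall architecture—analytic continuation \`a la Kassaei, working one prime at a time, decomposing $U_\frakp$ into branches, and invoking a norm estimate whose exponent is $\inf_{\tau\in J}k_\tau - f_\frakp$—is the paper's strategy. Two points of divergence are worth noting.

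First, the paper does not parametrize the geometry by partial Hasse invariants or by a theory of partial canonical subgroups attached to $J\subseteq\Sigma_\frakp$. It works instead with the directional degrees $\deg_\tau H$ and their \emph{twisted} versions $\tilde\deg_\tau=\sum_{j}p^{f_\frakp-1-j}\deg_{\sigma^j\tau}$; the key fact (Proposition~\ref{prop:twisted_deg_increase}) is that every branch of $U_\frakp$ weakly increases each $\tilde\deg_\tau$. This monotone flow on the hypercube $\tilde\cF_\Sigma$ organizes an induction over the vertices $x_J$, $J\subseteq I$, which replaces the stratification you sketch. The partial-canonical-subgroup apparatus you anticipate as the ``principal obstacle'' is therefore not needed.

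Second—and this is the substantive gap—your description of the branch dichotomy near the hard vertex is reversed. Near $x_I$ (where $H$ is anti-canonical precisely in the directions $I$), the paper shows (Lemma~\ref{lem:deg_nsp} and the remark following it) that there is at most \emph{one} branch $U_p^{sp}$ of $U_\frakp$ that remains near $x_I$, while \emph{all} the remaining branches $U_p^{nsp}$ land in a fixed neighborhood $\cU_\varnothing(\epsilon)$ of the fully canonical vertex, where $f$ has already been extended by the finite-slope ``automatic'' continuation of Lemma~\ref{lem:loc_u}. The contraction estimate (Lemma~\ref{lem:keynormest}) is for this single special branch: $|U_p^{sp}h/a_p|\le p^{-\mu}|h|$ under the slope hypothesis. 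Consequently the object one must show converges is not $a_\frakp^{-n}U_\frakp^{\,n}f$ (which is tautologically $f$), but the geometric-type series
\[
F_m=\sum_{j=0}^{m-1}a_p^{-(j+1)}\,U_p^{nsp}(U_p^{sp})^j f,
\]
defined on the nested neighborhoods $S_{I,m}(\delta)=(U_p^{sp})^{-m}S_{I,0}(\delta)$, and glued with the direct extensions $f_m=(U_p/a_p)^m f$ on the complements $S_{I,0}(\delta)\setminus S_{I,m}(\delta')$ (Proposition~\ref{prop:norm}). If one instead had one branch moving toward the overconvergent locus and $p^{f_\frakp}-1$ branches staying near $x_I$, as you write, the series would not be controlled by a single contraction factor and the argument would not close. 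Once the roles are corrected, the rest of your outline (including the final extension over $\cY\cF_I$ by one more application of the $\tilde\deg$-monotonicity) matches the paper.
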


We use the method of analytic continuation to prove Theorem~\ref{thm:main}.
In the situation when $I=\Sigma$, this recovers the classicality theorem proven by Kassaei or Pilloni--Stroh, who assumed $p$ is unramified.
Although when $I=\Sigma$, Bijakowski proved a classicality theorem not assuming $p$ is unramified, it is Kassaei's approach that is more suitable for partial classicality.
Indeed, when studying the dynamic of $U_\frakp$-operators, it has been proven to be successful to use \emph{degree} to parametrize regions on the Hilbert modular variety, and analyze how $U_\frakp$-operators influence degrees.
Kassaei made efforts to analyze how $U_\frakp$-operators affect the more refined \emph{directional degrees}, but only when $p$ is unramified.
On the other hand, Bijakowski was able to use only the degree function to prove a classicality theorem allowing $p$ to be ramified.
In the situation of partial classicality, the weight $k_\tau$ with $\tau\in \Sigma$ in the slope condition is independent of each other, while the $U_\frakp$-operator intertwines all directional degrees inducing $\frakp$.
As a result, we cannot avoid analyzing the directional degrees like Bijakowski did.

We mention some related work on partial classicality theorems.
Barrera Salazar and Williams \cite{BS-W} took the perspective of overconvergent cohomology for a general quasi-split reductive group $\cG$ over $\QQ$ with respect to a parabolic subgroup $Q$ of $G=\cG/\QQ_p$.
Applying their work to the situation of Hilbert modular forms (i.e., $\cG = \Res_{F/\QQ}\GL_2$), we would recover Theorem \ref{thm:main} in the restrictive case of $I\subseteq \Sigma$ such that $I\cap \Sigma_\frakp$ is either $\Sigma_\frakp$ or $\varnothing$ for each $\frakp\mid p$.
In \cite[Proposition 4.3(i)]{Breuil} for the special case $S_{>1} = S$, Breuil gave a conjecture about partial classicality:
In the restrictive case when $I$ is either $\Sigma_\frakp$ or $\varnothing$ for each $\frakp\mid p$, if $f$ satisfies the weaker assumption $\val_p(a_\frakp) < \inf_{\tau\in \Sigma_\frakp} \{k_\tau\} -1$ for all $\frakp\mid p$ such that $I\cap \Sigma_\frakp\neq \varnothing$, then $f$ is $I$-classical.
Yiwen Ding \cite[Appendix A]{Ding} studied partial classicality from the perspective of Galois representations.
He also did not restrict to the case when $I\cap \Sigma_\frakp$ is either $\Sigma_\frakp$ or $\varnothing$.
Namely, let $\rho_f\colon \Gal_F\rightarrow \GL_2(L)$ be the Galois representation associated to an overconvergent Hilbert Hecke eigenform $f$, where $L$ is a finite extension of $\QQ_p$. If $\val_p(a_\frakp)<\inf_{\tau\in I\cap \Sigma_\frakp}\{k_\tau\}-1$, then $\left.\rho_f\right|_{\Gal_{F_\frakp}}$ is $(I\cap \Sigma_\frakp)$-de Rham.

There are many interesting questions related to $I$-classical overconvergent forms.
In the direction of classicality, can we prove Theorem~\ref{thm:main} not assuming $k_\tau$ is an integer for $\tau\notin I$? 
If $f$ is $I$-classical and $\val_p(a_\frakp)<\inf_{\tau\in J\cap \Sigma_\frakp}\{k_\tau\}-f_\frakp$, will $f$ be $I\cup J$-classical 
(see \cite[Conjecture 3.2 (ii)]{Breuil})?
Relating to Galois representations, 
if $f$ is an $I$-classical Hilbert Hecke eigenform, does the Galois representation $\rho_f$ satisfy the condition that $\left.\rho_f\right|_{\Gal_{F_\frakp}}$ is $(I\cap \Sigma_\frakp)$-de Rham for all $\frakp\mid p$?
If this is true, one can further ask in the flavor of Kisin's interpretation of Fontaine--Mazur conjecture: if $f$ is overconvergent and $\left.\rho_f\right|_{\Gal_{F_\frakp}}$ is $(I\cap \Sigma_\frakp)$-de Rham for all $\frakp\mid p$, is $f$ $I$-classical?

For the organization of this paper: In Section \ref{sec:Hilb}, we define the degree function and partially classical overconvergent forms.
In Section \ref{sec:classicality}, we prove Theorem \ref{thm:main}.

\subsection*{Notations}
Fix a totally real field $F$ of degree $g$ over $\QQ$.
Let $\Sigma$ denote the set of archimedean places of $F$; in particular $\#\Sigma=g$.
Fix a rational prime $p$ which is unramified in $F$ and $(p) = \frakp_1 \cdots \frakp_r$ in $F$.
For each prime $\frakp$ of $F$ above $p$, let $f_\frakp$ be the residue degree of $\frakp$.
Fix an isomorphism $\iota_p\colon\CC\xrightarrow{\sim}\Qpbar$, and identify archimedean embeddings $\tau\colon F \rightarrow \CC$ with $p$-adic embeddings $\iota_p\circ \tau \colon F \rightarrow \Qpbar$.
For each prime $\frakp$ of $F$ above $p$, let $\Sigma_{\frakp}\subseteq \Sigma$ be the subset of $p$-adic embeddings inducing $\frakp$.
Hence $\#\Sigma_\frakp = f_\frakp$.
Let $L$ be a finite extension of $\QQ_p$ containing the image of all $p$-adic embeddings $\iota\circ \tau$ of $F$.
Since $p$ is assumed to be unramified in $F$, we may also assume that $L$ is an unramified extension of $\QQ_p$.
Let $k_L$ denote the residue field of $L$.
Let $\delta_F$ be the different ideal of $F$.

\section{Partially classical overconvergent forms}
\label{sec:Hilb}


\subsection{Hilbert modular varieties} 
Let $N\geq4$ be an integer, and $p\nmid N$.
Let $\frakc$ be a fractional ideal of $F$.
Denote by $\frakc^+\subseteq\frakc$ the cone of totally positive elements, i.e., the elements in $\frakc$ which are positive under every embedding $\tau\colon F\rightarrow \RR$.
Let $Y_\frakc\rightarrow \Spec \cO_L$ be the Hilbert modular scheme classifying $(\underline{A}, H) = (A/S,i,\lambda,\alpha, H)$ where 
\begin{itemize}
    \item $A$ is an abelian scheme of relative dimension $g$ over an $\cO_L$-scheme $S$,
    
    \item $i\colon \cO_F \hookrightarrow \End_S(A)$ is a ring homomorphism. Here $i$  is called a real multiplication on $A$,
    
    \item $\lambda\colon (\cP_A, \cP_A^+) \rightarrow (\frakc,\frakc^+)$ is an isomorphism of $\cO_F$-modules identifying the positive elements, and inducing an isomorphism $A\tensor_{\cO_F} \frakc \isom A^\vee$.
    Here $\cP_A=\Hom_{\cO_F}(A,A^\vee)^\mathrm{sym}$ is the projective $\cO_F$-module of rank $1$ consisting of symmetric morphisms from $A$ to its dual abelian scheme $A^\vee$, and $\cP_A^+\subseteq \cP_A$ is the cone of polarizations.
    Here $\lambda$ is called a $\frakc$-polarization of $A$,
   
    \item$\alpha\colon \mu_N\tensor \delta_F^{-1} \rightarrow A$ is a closed immersion of $\cO_F$-group schemes. 
    Here $\alpha$ is called a $\Gamma_1(N)$-level structure, and 
    \item $H\subseteq A[p]$ is a finite flat $\cO_F$-subgroup scheme of rank $p^g$ which is isotropic with respect to the $\mu$-Weil pairing for some polarization $\mu\in \cP_A^+$ of degree prime to $p$.
\end{itemize} 
Let $\mathrm{Cl}(F)^+$ be the narrow class group of $F$, namely the quotient of the abelian group of fractional ideals of $F$ by the subgroup of principal ideals generated by totally positive elements. 
Let $\{c_i\}$ be a set of representatives of $\mathrm{Cl}(F)^+$.
Define $Y=\coprod_{\frakc_i} Y_{\frakc_i}$, which is independent of the choice of the representatives $\{c_i\}$.
Denote by $\frakY$ the completion of $Y$ along its special fiber, and by $\cY$ the rigid generic fiber of the formal scheme $\frakY$.
We also use this convention of letter styles for other schemes: when $K/\QQ_p$ is a finite extension and $S$ is a scheme over $\cO_K$, we denote by $\frakS$ the associated formal scheme and by $\cS$ the rigid generic fiber of $\frakS$.

\subsection{Directional degrees}
We first recall the definition of the degree for a commutative finite flat group scheme.
See \cite{Fargue} for more detailed studies of the concept.

Let $S$ be a scheme and $G$ a commutative finite flat group scheme over $S$.
Let $\omega_G$ be the sheaf of invariant differentials on $G$.
Define 
\[ \delta_G \defeq \Fitt_0 \omega_G \]
as the $0$-th Fitting ideal of $\omega_G$.
This is an invertible ideal sheaf in $\cO_S$.

Now let $K/\QQ_p$ be a finite extension and $S=\Spec \cO_K$.
Then the degree of $G$ is defined as \cite[D\'efinition 4]{Fargue} the rational number
\[\deg G = \deg \omega_G \defeq \val_p (\delta_G).\]
Writing $\omega_G = \Directsum_i \cO_K/x_i\cO_K$, then $\deg G = \sum_i \val_p(x_i)$.
Equivalently, $\deg G =\ell(\omega_G)/e_K$,
where $\ell(\omega_G)$ is the length of the $\cO_K$-module $\omega_G$, and $e_K$ is the ramification index of $K$.
Recall that the height $\operatorname{ht} G$ of $G$ is such that $|G|=p^{\operatorname{ht} G}$.
Hence $G$ is \'etale if and only if $\deg G=0$, and $G$ is multiplicative if and only if $\deg G = \operatorname{ht} G$.

More generally, let $S$ be a scheme over $\cO_K$. Each closed point $s$ in the rigid analytic space $\cS$ is defined over the ring of integer of a finite extension of $K$ \cite[Section 8.3, Lemma 6]{Bosch}.
Hence we obtain the degree function $$\deg\colon \cS \rightarrow [0,\infty) \cap \QQ \quad s\mapsto \deg G_s.$$
The inverse image of a (open, closed, or half-open) interval  in $[0,\infty)$ is an admissible open of $\cS$.
Moreover, when the interval is closed and its end points $a\leq b$ are rational numbers, then the inverse image is quasi-compact.

We record some properties of $\deg$ which we will constantly use for computation.
\begin{lemma}\cite[lemme 4]{Fargue}
Let $0\rightarrow G'\rightarrow G\rightarrow G''\rightarrow 0$ be a short exact sequence of finite flat group schemes over $S$.
Then $\deg G = \deg G'+\deg G''$.
\end{lemma}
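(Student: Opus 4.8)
The plan is to reduce the statement to additivity of length along short exact sequences of finite-length $\cO_K$-modules. Since $\deg G = \ell(\omega_G)/e_K$ by definition, it suffices to construct a short exact sequence
\[
0 \longrightarrow \omega_{G''} \longrightarrow \omega_G \longrightarrow \omega_{G'} \longrightarrow 0
\]
of $\cO_K$-modules whose three terms all have finite length; then $\ell(\omega_G)=\ell(\omega_{G'})+\ell(\omega_{G''})$, and dividing by $e_K$ gives the claim. Finiteness of length is immediate: each of $G'$, $G$, $G''$ is finite flat over $\cO_K$ of $p$-power order, hence étale after inverting $p$, so each module of invariant differentials is a finitely generated torsion $\cO_K$-module.

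To construct the sequence, write $\pi\colon G\twoheadrightarrow G''$ for the quotient map and $e_G,e_{G'},e_{G''}$ for the identity sections, so that $\omega_G=e_G^*\Omega^1_{G/\cO_K}$ and similarly for $G'$ and $G''$. Applying $e_G^*$ to the fundamental exact sequence of Kähler differentials $\pi^*\Omega^1_{G''/\cO_K}\to\Omega^1_{G/\cO_K}\to\Omega^1_{G/G''}\to 0$ and using $\pi\circ e_G=e_{G''}$ yields the right-exact sequence $\omega_{G''}\to\omega_G\to\omega_{G'}\to 0$; here the last term is identified using that $\pi$ is an fppf torsor under $G'$, so $\Omega^1_{G/G''}$ is fppf-locally pulled back from $\Omega^1_{G'/\cO_K}$ and restricts along $e_G$ to $\omega_{G'}$. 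To upgrade this to \emph{left}-exactness I would exploit the torsor structure again: fppf-locally on $G''$ the torsor trivializes, $G\cong G''\times_{\cO_K}G'$, and over such a chart the sequence $0\to\pi^*\Omega^1_{G''/\cO_K}\to\Omega^1_{G/\cO_K}\to\Omega^1_{G/G''}\to 0$ is split exact compatibly with the identity sections; since exactness is fppf-local and the local splittings survive $e_G^*$, the displayed sequence of $\omega$'s is exact.

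The genuinely delicate point is this left-exactness — right-exactness is formal — and I expect the main obstacle to be making the fppf-local triviality argument precise, in particular arranging that the local product decompositions of $G$ are compatible enough with $e_G$ for the local splittings to descend after applying $e_G^*$. If that proves awkward there is a clean detour that avoids it: right-exactness already gives $\ell(\omega_G)\le\ell(\omega_{G'})+\ell(\omega_{G''})$, and applying the same inequality to the Cartier duals, together with the standard identities $\deg H+\deg H^\vee=\operatorname{ht}H$ for finite flat $H/\cO_K$ and $\operatorname{ht}G=\operatorname{ht}G'+\operatorname{ht}G''$ (orders multiply in a short exact sequence), forces equality throughout, whence $\deg G=\deg G'+\deg G''$.
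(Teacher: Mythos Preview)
The paper does not supply its own proof; the lemma is quoted directly from \cite[lemme~4]{Fargue}. Your argument is correct and is essentially the one Fargues gives: the quotient $G\to G''$ is an fppf torsor under $G'$, so the relative cotangent sequence is fppf-locally split, yielding the short exact sequence $0\to\omega_{G''}\to\omega_G\to\omega_{G'}\to 0$ of finite-length $\cO_K$-modules and hence additivity of length. The Cartier-duality detour you sketch at the end also works and neatly sidesteps the left-exactness verification, provided the identity $\deg H+\deg H^\vee=\operatorname{ht}H$ is available independently --- and indeed Fargues establishes that identity separately, prior to and without invoking additivity.
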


\begin{lemma} \label{lem2} \cite[p.~2]{Fargue}
Let $\lambda\colon A\rightarrow B$ be an isogeny of $p$-power degree between abelian schemes over $S$.
Let $G\defeq \ker \lambda$.
Let $\omega_{A/S}$ and $\omega_{B/S}$ be the sheaves of invariant differentials of $A$ and $B$, respectively.
Let $\lambda^\ast\colon \omega_{B/S} \rightarrow \omega_{A/S}$ be the induced pullback map.
Then 
\[
\deg G = \val_p(\det \lambda^\ast).\]
In particular, if $A$ is of dimension $g$, then $\deg A[p] = g$.
\end{lemma}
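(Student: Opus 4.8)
\emph{Proof proposal.}
The plan is to exhibit $\omega_G$ as the cokernel of $\lambda^\ast$ and then compute its length over $\cO_K$. The input is the cotangent (conormal) exact sequence attached to the morphism $\lambda\colon A\to B$,
\[
\lambda^\ast\Omega^1_{B/S}\longrightarrow \Omega^1_{A/S}\longrightarrow \Omega^1_{A/B}\longrightarrow 0,
\]
which I would pull back along the identity section $e_A\colon S\to A$. Since $\lambda\circ e_A=e_B$, the first term becomes $e_B^\ast\Omega^1_{B/S}=\omega_{B/S}$ and the second becomes $e_A^\ast\Omega^1_{A/S}=\omega_{A/S}$. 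For the third term I would use that $G=\ker\lambda=A\times_{B,\,e_B}S$, so that $\Omega^1_{A/B}$ restricts on $G$ to $\Omega^1_{G/S}$ by compatibility of K\"ahler differentials with base change; since $e_A$ factors as $S\xrightarrow{e_G}G\hookrightarrow A$, this gives $e_A^\ast\Omega^1_{A/B}=e_G^\ast\Omega^1_{G/S}=\omega_G$. The upshot is a right-exact sequence
\[
\omega_{B/S}\xrightarrow{\;\lambda^\ast\;}\omega_{A/S}\longrightarrow\omega_G\longrightarrow 0
\]
of finitely generated $\cO_K$-modules, in which $\omega_{A/S}$ and $\omega_{B/S}$ are both free of rank $g=\dim A=\dim B$.

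Next I would argue that $\lambda^\ast$ is injective. Because $K$ has characteristic zero, the isogeny $\lambda$ is \'etale over the generic point of $S$, so $\lambda^\ast\tensor_{\cO_K}K$ is an isomorphism; since $\omega_{B/S}$ is free, hence torsion-free, the map $\lambda^\ast$ itself is injective. (Equivalently: $G$ is killed by a power of $p$, so $\omega_G$ is torsion and $\lambda^\ast$ has full rank.) Thus $\omega_G\isom\omega_{A/S}/\lambda^\ast(\omega_{B/S})$ is the cokernel of an injection between free $\cO_K$-modules of rank $g$. Choosing bases and putting the matrix of $\lambda^\ast$ into Smith normal form over the discrete valuation ring $\cO_K$, one reads off $\ell(\omega_G)=\val_K(\det\lambda^\ast)$ for the normalized valuation $\val_K$; the determinant is only well defined up to a unit, so this quantity is intrinsic. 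Dividing by the ramification index and using $\val_p=e_K^{-1}\val_K$ on $\cO_K$ together with the definition $\deg G=\ell(\omega_G)/e_K$ yields $\deg G=\val_p(\det\lambda^\ast)$.

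For the final assertion I would specialize to $B=A$ and $\lambda=[p]$, so that $G=A[p]$; since multiplication by $p$ acts on the invariant differentials $\omega_{A/S}$ as multiplication by $p$, we get $\det\lambda^\ast=p^{g}$ and hence $\deg A[p]=\val_p(p^{g})=g$. I expect the only point demanding genuine care to be the identification $e_A^\ast\Omega^1_{A/B}\isom\omega_G$ together with the injectivity of $\lambda^\ast$ (i.e.\ the nonvanishing of $\det\lambda^\ast$), which is precisely what makes the right-hand side of the claimed formula finite and meaningful; everything else is the structure theory of finitely generated modules over a discrete valuation ring.
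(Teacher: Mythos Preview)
The paper does not supply its own proof of this lemma; it is simply quoted from Fargues \cite[p.~2]{Fargue} without argument. Your proposal is a correct and essentially standard proof: you identify $\omega_G$ as the cokernel of $\lambda^\ast\colon\omega_{B/S}\to\omega_{A/S}$ via the conormal sequence and the base-change identification $e_A^\ast\Omega^1_{A/B}\isom\omega_G$, check injectivity of $\lambda^\ast$ using that $\lambda$ is generically \'etale in characteristic zero, and then read off the length via Smith normal form over the DVR $\cO_K$. The specialization to $\lambda=[p]$ is handled correctly as well, since $[p]$ acts as multiplication by $p$ on invariant differentials. There is nothing to compare against in the paper itself, and no gap in your argument.
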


When $G$ has an $\cO_F$-module structure, we can define directional degree functions on $\cS$.
Instead of a general exposition, we only explain this for $\cS = \cY$, the Hilbert modular variety over $L$.
See also \cite[Section 4.2]{PS} or \cite[Section 2.9]{Kas}.
Let $(\underline{A}^\univ, H^\univ)$ be the universal abelian scheme over the Hilbert modular variety $Y$.
Let $\omega_{H^\univ}$ be the sheaf of invariant differentials of $H^\univ$, which is a $\cO_F/p\cO_F$-module.
Since $p$ is unramified in $F$, $\Sigma$ is in bijection with the embeddings $\cO_F/p\cO_F \inj k_L$.
We decompose $\omega_{H^\univ}$ according to the embeddings $\cO_F/p\cO_F \inj k_L$ to obtain $$\omega_{H^\univ} = \bigoplus_{\tau\in \Sigma} \omega_{H^\univ,\tau}.$$
For each $\tau\in \Sigma$, define $\delta_\tau \defeq \Fitt_0(\omega_{H^\univ,\tau})$, which is an invertible ideal sheaf in $\cO_Y$.

Let $y = (\underline{A},H)$ be a closed point of $\cY$.
Let $K$ be the finite extension of $L$ over which $y$ is defined.
Then we have the rational number $\deg \omega_{H,\tau}$.
In addition, $\deg \omega_{H,\tau} \in [0,1]$.
Indeed, for each $\frakp\mid p$, the subgroup scheme $H[\frakp]$ of $H$ is a Raynaud group scheme over $\Spec \cO_K$, namely a $k_\frakp=\cO_F/\frakp\cO_F$-vector space scheme of dimension $1$.
For each tuple $(d_\tau)_{\tau\in \Sigma_\frakp}$ of elements of $\cO_K$ with $\val_p (d_\tau) \leq 1$, Raynaud associates a $k_\frakp$-vector space scheme of dimension $1$ 
\[ H_{(d_\tau)} \defeq \Spec \cO_K[X_\tau, \tau\in \Sigma_\frakp]/ (X_{\sigma^{-1}\circ\tau}^p-d_\tau X_\tau), \]
where $\sigma$ is the Frobenius automorphism of $L$ over $\QQ_p$ lifting $x\mapsto x^p$ modulo $p$, and the $k_\frakp$-action on $X_\tau$ is given by the character $k_\frakp^\times \rightarrow \cO_K^\times$ induced by 
 $\tau \colon F \rightarrow L$.
Moreover, each $k_\frakp$-vector space scheme of dimension $1$ over $\cO_K$ is isomorphic to some $H_{(d_\tau)}$  \cite[TH\'EOR\`EME 1.4.1]{Raynaud}.
Since $\omega_{H_{(d_\tau)},\tau} = \cO_K/d_\tau\cO_K$, we have $\deg \omega_{H,\tau} = \deg \omega_{H_{(d_\tau)},\tau} = \val_p(d_\tau) \in [0,1]$.

Hence for each $\tau\in \Sigma$, we can define
the directional degree function
\[\deg_\tau\colon \cY \rightarrow [0,1]\cap \QQ\quad y = (\underline{A},H) \mapsto \deg \omega_{H,\tau},
\]
as well as 
\[\underline{\deg}\colon \cY \rightarrow ([0,1]\cap \QQ)^\Sigma  \quad y \mapsto (\deg_\tau y)_\tau.
\]
%
As before, the inverse image of $\deg_\tau$ (resp. $\underline{\deg}$) of a subset of $[0,1]$ (resp.  $[0,1])^\Sigma$) defined by a finite number of affine inequalities is an admissible open of $\cY$.
Moreover, when the inequalities are all non-strict and the coefficients are all rational numbers, then the inverse image is quasi-compact.


Given $I\subseteq \Sigma$, we define 
\begin{align*}
\cF_I 
&\defeq \prod_{\tau\in\Sigma}\cF_{I,\tau}, \text{ where } \cF_{I,\tau} = \begin{cases} [0,1], & \tau\in I \\ [1,1], & \tau \not\in I.\end{cases} 
\end{align*}
Then $\cF_I$ is a closed $|I|$-dimensional hypercube in $([0,1]\cap \QQ)^{\Sigma} = {\cF}_{\Sigma}$.
We also define $x_I\in [0,1]^\Sigma$ to be the vertex
\[
x_{I,\tau} = \begin{cases} 0, & \tau\in I\\ 1, & \tau \not\in I. \end{cases}
\]
Hence the vertices of $\cF_I$ are exactly the $x_J$'s with $J\subseteq I$.
Denote by $\cY \cF_I$ the quasi-compact admissible open $\underline{\deg}^{-1} \cF_I$ of $\cY$.

\begin{definition}
Let $\frakp\mid p$ be a prime of $F$.
For $\tau\in \Sigma_\frakp$, define the \emph{twisted directional degree} 
\[\tilde\deg_\tau\colon \cY \rightarrow [0,\frac{p^{f_\frakp}-1}{p-1}]\cap \QQ\]
by
\[
\tilde\deg_\tau \defeq \sum_{j=0}^{f_\frakp-1} p^{f_\frakp-1-j} \deg_{\sigma^j\circ\tau} = p^{f_\frakp-1} \deg_\tau + p^{f_\frakp-2} \deg_{\sigma\circ\tau}+\cdots +\deg_{\sigma^{f_\frakp-1}\circ\tau}.
\]
Here $\sigma$ is the Frobenius automorphism of the unramified extension $L$ over $\QQ_p$, lifting $x\mapsto x^p \bmod p$.
We also define 
\[\underline{\tilde\deg}\colon \cY \rightarrow ([0,\frac{p^{f_\frakp}-1}{p-1}]\cap \QQ)^\Sigma  \quad y \mapsto (\tilde\deg_\tau y)_\tau.
\]
\end{definition}

We use the overhead tilde notation $\tilde {(\cdot)}$ to denote the image under the linear transformation 
\[
\RR^\Sigma \rightarrow \RR^\Sigma  \quad (x_\tau)_\tau \mapsto (\tilde{x}_\tau)_\tau, \text{ where } \tilde x_\tau =  \sum_{j=0}^{f_\frakp-1} p^{f_\frakp-1-j} x_{\sigma^j\circ\tau} \text{ for } \tau\in\Sigma_\frakp.
\]
In particular, if $(x_\tau)_\tau = \underline{\deg}\  y $ for some $y\in \cY$, then  $(\tilde x_\tau)_\tau = \underline{\tilde \deg}\ y$.
For example, $\tilde x_I$ is the vertex of $\tilde\cF_\Sigma$ given by 
\[\tilde x_{I,\tau} = \sum_{j=0}^{f_\frakp-1} p^{f_\frakp-1-j} x_{I,\sigma^j\circ\tau} \text{ for } \tau\in \Sigma_\frakp.\]
See Figures~\ref{fig:F} and~\ref{fig:tildeF} for an example of $\mathcal F_\Sigma$ and $\tilde{{\mathcal F}}_\Sigma$.

\begin{figure}
\centering
\begin{tikzpicture}
\draw[thick](0,0)--(2,0)--(2,2)--(0,2)--(0,0);
\draw(-0.3,-0.3) node{$x_{12}=(0,0)$};
\draw(2.3,-0.3) node{$x_2=(1,0)$}; 
\draw(2.3,2.3) node{$x_{\varnothing}=(1,1)$}; 
\draw(-0.3,2.3) node{$x_1=(0,1)$}; 
\end{tikzpicture}
\caption{$\cF_\Sigma$ when $g=2$}
\label{fig:F}
\end{figure}
\begin{figure}
\centering
\tikzmath{\p = 3;}
\begin{tikzpicture}
\draw[thick](0,0)--(\p,1)--(\p+1,\p+1)--(1,\p)--(0,0);
\draw(-0.3,-0.3) node{$\tilde x_{12}=(0,0)$}; 
\draw(\p+.3,1-.5) node{$\tilde x_2=(p,1)$}; 
\draw(\p+1.3,\p+1.3) node{$\tilde x_{\varnothing}=(p+1,p+1)$}; \draw(1-.3,\p+.5) node{$\tilde x_1=(1,p)$}; 
\end{tikzpicture}
\caption{$\tilde\cF_\Sigma$ when $g=2$}
\label{fig:tildeF}
\end{figure}

\subsection{Hilbert modular forms}
Let $\omega = \omega_{A^\univ}$ be the sheaf of relative differentials of the universal abelian scheme over $Y$.
The sheaf $\omega$ is an $\cO_F\tensor_\ZZ \cO_Y$-module, locally free of rank $1$.
The $\cO_F$-module structure on $\omega$ provides the decomposition with respect to embeddings $\tau\colon F \rightarrow L$
\[
\omega = \Directsum_{\tau \in \Sigma} \omega_\tau, \]
where each $\omega_\tau$ is an $\cO_Y$-module, locally free of rank $1$.
Given $\underline{k} = (k_\tau)_{\tau\in\Sigma}\in \ZZ^{\Sigma}$,
we define an invertible sheaf on $Y$
\[\omega^{\underline{k}} = \bigotimes_{\tau\in \Sigma}  \omega_{\tau}^{k_{\tau}}.\]
We use the same notation $\omega^{\underline{k}}$ for the invertible sheaf on $\cY$ coming from analytifying $\omega^{\underline{k}}$.

The space of \emph{Hilbert modular forms of level $\Gamma_1(N)\cap \Gamma_0(p)$ and weight $\underline{k}$} is defined to be $H^0(Y,\omega^{\underline{k}})$.
By GAGA and Koecher principle, it is the same as $H^0(\cY,\omega^{\underline{k}})$ \cite[Proposition 5.1.2]{PS}.

\begin{definition} \label{def:I-cl} 
Let $I\subseteq \Sigma$.
The space of \emph{$I$-classical overconvergent Hilbert modular forms of level $\Gamma_1(N)\cap \Gamma_0(p)$ and weight $\underline{k}$} is
\[
H^{0,\dagger}(I,\omega^{\underline{k}}) \defeq \varinjlim_\cV H^0(\cV, \omega^{\underline{k}}),
\]
where $\cV$ runs through strict neighborhoods of 
$\cY\cF_I$ in $\cY$.
\end{definition}
When $I=\varnothing$, $I$-classical simply means overconvergent, and when $I=\Sigma$, $I$-classical means classical.
Whenever $J\subseteq I$, we have a map \[H^{0,\dagger}(I,\omega^{\underline{k}})\rightarrow H^{0,\dagger}(J,\omega^{\underline{k}}) \] 
given by restriction.
This is an injective map.

\subsection{$U_\frakp$-operators} \label{sec:Up}
Let $\frakp\mid p$ be a prime of $F$ above $p$ and $f_\frakp$ the residue degree of $\frakp$.

Let $Y(\frakp)\rightarrow \Spec L$ be the moduli space whose $S$-points consist of  $(\underline{A},H,H_1)$, where $(\underline{A},H)\in Y(S)$ and $H_1\subseteq A[\frakp]$ is a finite flat isotropic $\cO_F$-subgroup scheme of rank $p^{f_\frakp}$ and $H_1\neq H[\frakp]$.
We have the $U_\frakp$-correspondence of $Y\tensor_{\cO_L} L$:
\begin{center}
\begin{tikzcd}
 & Y(\frakp) \ar[ld, "p_1"'] \ar[rd, "p_2"] & \\
Y\tensor_{\cO_L}L&& Y\tensor_{\cO_L}L   
\end{tikzcd}
\end{center}
Here the projections are
$$p_1\colon  (\underline{A},H,H_1) \mapsto (\underline{A}, H), $$
and 
$$p_2\colon  (\underline{A},H,H_1) \mapsto (\underline{A}/H_1, \bar{H}), $$
where $\bar{H}$ is the image of $H$ under $A \rightarrow A/H_1$.

Let $Y(\frakp)^{\mathrm{an}}$ be the rigid analytification of  $Y(\frakp)$
\cite[Section 5.4, Corollary 5]{Bosch}, which is a rigid analytic space over $L$.
We have the induced $U_\frakp$-correspondence, $p_1$ and $p_2$ over $(Y\tensor L)^\mathrm{an}$.
Note that $(Y\tensor L)^{\mathrm{an}}$ contains $\cY$.
Let $\cY(\frakp)\defeq Y(\frakp)^\mathrm{an}\times_{(Y\tensor L)^\mathrm{an},p_1} \cY$. 
We then have the $U_\frakp$-correspondence, $p_1$ and $p_2$ over $\cY$.

Given a subset $\cU$ of $\cY$, we then obtain a subset of $\cY$
\[ U_\frakp(\cU) \defeq p_2p_1^{-1}(\cU). \]
Given two admissible opens $\cU, \cV \subseteq \cY$ such that $U_\frakp(\cV)\subseteq \cU$,  we have $U_\frakp\colon \omega^{\underline{k}}(\cU) \rightarrow \omega^{\underline{k}}(\cV)$ defined by 
\[ (U_\frakp f)(\underline{A},H) = \frac1{p^{f_\frakp}} \sum_{(\underline{A}/H_1,\bar{H})\in U_\frakp(\underline{A},H)} \pr^\ast f(\underline{A}/H_1,\bar{H}),\]
where $\pr\colon A \rightarrow A/H_1$ is the natural projection.

We record the dynamic of $U_\frakp$ with respect to the (twisted) directional degrees. 
See \cite[Proposition 5.1.4, 5.1.14]{Kas} or \cite[Proposition 4.4.1, 4.4.2]{PS}.
\begin{proposition}
\label{prop:twisted_deg_increase}
Let $y=(\underline{A},H)\in \cY$.
Let $\frakp\mid p$ be a prime of $F$ above $p$, and $y'=(\underline{A/H_1},\bar{H})\in U_\frakp(y)$.
Then 
\begin{enumerate}
    \item $\tilde \deg_\tau (y') \geq \tilde\deg_\tau (y)$ for all $\tau\in\Sigma_\frakp$, and
    \item if $$\sum_{\tau\in\Sigma_\frakp} \tilde\deg_\tau y' =\sum_{\tau\in\Sigma_\frakp} \tilde\deg_\tau y,$$ equivalently, $\sum_{\tau\in\Sigma_\frakp} \deg_\tau y' =\sum_{\tau\in\Sigma_\frakp} \deg_\tau y$, then $\deg_\tau y \in \{0,1\}$ for all $\tau\in \Sigma_\frakp$.
\end{enumerate}
\end{proposition}

\section{Partial classicality} \label{sec:classicality}
The content of this section is to prove the following partial classicality theorem.
\begin{theorem} \label{thm:partial_classicality}
Let $f$ be an overconvergent Hilbert modular form of weight $\underline{k}$.
Let $I\subseteq \Sigma$.
Assume that for all $\frakp\mid p$, $U_\frakp(f) = a_\frakp f$ such that 
\begin{align} \label{small_slope}
\val_p(a_\frakp) < \inf_{\tau\in I\cap \Sigma_\frakp} \{k_\tau\}-f_\frakp.
\end{align}
Then $f$ is $I$-classical.
\end{theorem}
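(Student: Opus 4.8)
The plan is to follow Kassaei's analytic-continuation strategy, adapted to the partial setting. Let $f$ be the given overconvergent form, so $f \in H^0(\cV, \omega^{\underline k})$ for some strict neighborhood $\cV$ of $\cY\cF_\varnothing = \cY\{x_\varnothing\}$, i.e., $f$ is defined on a region where all directional degrees $\deg_\tau$ are close to $1$ (the multiplicative locus). The goal is to show $f$ extends to a strict neighborhood of $\cY\cF_I$, i.e., to a region where $\deg_\tau$ is allowed to range over all of $[0,1]$ for $\tau \in I$ while staying near $1$ for $\tau \notin I$. Because the restriction map $H^{0,\dagger}(I,\omega^{\underline k}) \to H^{0,\dagger}(\varnothing,\omega^{\underline k})$ is injective and the $U_\frakp$-eigenvalue equations propagate, it suffices to produce such an extension; its $U_\frakp$-eigenvalues are automatically $a_\frakp$.

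First I would set up the combinatorial bookkeeping on the hypercube $\cF_\Sigma = ([0,1]\cap\QQ)^\Sigma$, working one prime $\frakp \mid p$ at a time and using the twisted coordinates $\tilde x_\tau$ from the Definition, on which $U_\frakp$ acts monotonically by Proposition~\ref{prop:twisted_deg_increase}. The key dynamical input is a quantitative refinement of that monotonicity: for a point $y$ with some $\deg_\tau(y) < 1$ for $\tau \in \Sigma_\frakp$, the non-$H[\frakp]$ quotients $y' \in U_\frakp(y)$ have strictly larger twisted degree, with a gap bounded below in terms of how far $y$ is from the multiplicative locus. Combined with the estimate that pulling back along the degree-$p^{f_\frakp}$ isogeny $\pr$ contributes a factor whose valuation on $\omega^{\underline k}$ is governed by $\sum_{\tau \in \Sigma_\frakp} k_\tau \deg_\tau(D)$, and that the "bad" term $D = H[\frakp]$ (which would not increase degrees) is excluded from the $U_\frakp$-sum, the small-slope hypothesis $\val_p(a_\frakp) < \inf_{\tau \in I \cap \Sigma_\frakp}\{k_\tau\} - f_\frakp$ forces a contraction. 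Concretely, one iterates: given $f$ already extended to $\cY\cF$ for some closed $\cF$, the identity $a_\frakp^n f = U_\frakp^n f$ lets one express $f$ on a slightly larger region as $a_\frakp^{-n}$ times a sum over length-$n$ chains of quotients, all but finitely many of which land deep inside the region where $f$ is already known, and the slope bound makes the tail converge. This is the "automatic continuation" / "successive continuation" argument; one runs it prime by prime and embedding by embedding, enlarging $\cF$ from $\{x_\varnothing\}$ first along directions $\tau \in I$ with small $k_\tau$, until one reaches a strict neighborhood of $\cY\cF_I$.

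The step I expect to be the main obstacle is precisely the passage from Proposition~\ref{prop:twisted_deg_increase}'s qualitative monotonicity to the quantitative, direction-by-direction control needed here — this is exactly the point the introduction flags, that "the $U_\frakp$ operator intertwines all directional degrees inducing $\frakp$," so one cannot treat the $f_\frakp$ embeddings above $\frakp$ independently. The difficulty is that the twisted degree $\tilde\deg_\tau$ is a single number packaging $f_\frakp$ individual degrees, and the canonical-subgroup / Hodge-theoretic analysis of which $D \subseteq A[\frakp]$ arise, and what $(\deg_\sigma(D))_{\sigma \in \Sigma_\frakp}$ they produce, must be done carefully using the structure of $A[\frakp]$ as a truncated Barsotti–Tate group with $\cO_{F_\frakp}$-action. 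One needs: (i) a classification of the "partial" canonical subgroups inside $A[\frakp]$ in terms of the vector of directional degrees, (ii) an analogue of Kassaei's lemma describing how $U_\frakp$ moves the vector $(\deg_\sigma)_{\sigma}$ (not just its twisted sum), and (iii) enough uniformity to guarantee, on a suitable strict neighborhood, that after finitely many $U_\frakp$-steps every chain either reaches the multiplicative region in the relevant coordinates or contributes a term of large valuation. Once these local dynamical lemmas are in place — valid since $p$ is unramified — the convergence argument and the gluing into a global section on a strict neighborhood of $\cY\cF_I$ are formal, following the $I = \Sigma$ case of Kassaei verbatim.
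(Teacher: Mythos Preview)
Your overall plan---analytic continuation driven by the $U_\frakp$-dynamics on the twisted directional degrees, following Kassaei---is the right one, and your identification of the directional-degree analysis as the technical core is accurate. But two structural ingredients that carry the actual weight of the proof are missing from your sketch, and your phrase ``all but finitely many \ldots\ land deep inside \ldots\ and the slope bound makes the tail converge'' papers over exactly where they are needed.

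First, the paper does not enlarge the region ``embedding by embedding'' from $x_\varnothing$; it runs an induction on $|J|$ over the vertices $x_J$ with $J\subseteq I$. At the inductive step one assumes $f$ is already defined on strict neighborhoods of every $\underline{\deg}^{-1}x_J$ with $J\subsetneq I$, and the task is to extend across the single new vertex $x_I$. The ``automatic'' continuation (your iteration $a_\frakp^{-n}U_\frakp^n f$) only covers the region $\cU_I(\epsilon)$ \emph{away} from $x_I$: on that region every $U_p$-chain eventually reaches some previously handled $x_J$, because the twisted degrees increase strictly off the vertex set. Near $x_I$ itself this fails.

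Second, and this is the real gap: near $x_I$ there is (for $I\neq\varnothing$) a \emph{unique} quotient that stays near $x_I$, so one chain of arbitrary length never leaves. This is not a ``tail'' that converges by monotonicity; it must be isolated explicitly. The paper (following Kassaei) decomposes $U_p=U_p^{sp}+U_p^{nsp}$ on a neighborhood $S_{I,0}(\delta)$ of $x_I$, where $U_p^{sp}$ picks out that single persistent quotient and $U_p^{nsp}$ throws the rest into $\cU_\varnothing(\epsilon)$ (Lemma~\ref{lem:deg_nsp}). One then builds two approximations: $f_m=(U_p/a_p)^m f$ on the complement $S_{I,0}(\delta)\setminus S_{I,m}(\delta')$, and the explicit series $F_m=\sum_{j=0}^{m-1}(1/a_p)^{j+1}U_p^{nsp}(U_p^{sp})^{j}f$ on $S_{I,m}(\delta)$. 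The small-slope hypothesis enters precisely in the norm estimate $|U_p^{sp}h/a_p|\le p^{-\mu}|h|$ (Lemma~\ref{lem:keynormest}), which forces $F_m$ to be Cauchy and $F_m-f_m\to 0$ on the overlap, so the two sequences glue. Your sketch does not mention this split or the $F_m$ construction; saying the gluing is ``formal, following the $I=\Sigma$ case verbatim'' hides that one must first verify the special/non-special dichotomy holds relative to each intermediate vertex $x_I$, not only $x_\Sigma$, which is the content of Lemma~\ref{lem:deg_nsp}.
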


\begin{remark}
In the case of $I=\Sigma$, this is a theorem of Kassaei \cite{Kas} or Pilloni--Stroh \cite{PS}. 
Although when $I=\Sigma$, Bijakowski \cite{Bij} proved a classicality theorem not assuming $p$ is unramified, it is Kassaei's approach that is more suitable for partial classicality.
Both use the idea of analytic continuation.
Kassaei made efforts to analyze how $U_\frakp$-operators affect $\deg_\tau$ for all $\tau\in \Sigma_\frakp$, but only when $p$ is unramified.
On the other hand, Bijakowski was able to use only $\deg H[\frakp]$ to prove the classicality even when $p$ is ramified.
In the situation of partial classicality, the weight $k_\tau$ with $\tau\in \Sigma$ in the slope condition is independent of each other, while the $U_\frakp$-operator intertwines all directional degrees inducing $\frakp$, so we do need to understand the directional degrees.
\end{remark}


Throughout the section, we will assume that $p$ is inert in $F$.
To prove Theorem~\ref{thm:partial_classicality} for a general unramified $p$, we can apply the same argument to each prime $\frakp\mid p$.
For example, see \cite{Sasaki} and \cite[Lemma 7.4.2]{PS}.

Now we begin to prove Theorem~\ref{thm:partial_classicality} assuming $p$ is inert in $F$; in particular, $f_p = g$. 
We will show that if $U_p(f)=a_pf$ such that $\val_p(a_p)<\inf_{\tau\in I} k_\tau-g$, then $f$ is $J$-classical for all $J\subseteq I$, and hence $f$ is $I$-classical.
We do this by induction on $|J|$.
For $|J|=0$, it simply means that $f$ is overconvergent, which is true by assumption.
Assume that $f$ is $J$-classical for all $J\subsetneq I$, say $f$ is defined on a strict neighborhood of $\cY\cF_{J} = \underline{\deg}^{-1} \cF_J$.
In particular, $f$ is defined on a strict neighborhood of $\underline{\deg}^{-1} x_J$ for all $J\subsetneq I$.

\subsection{Automatic analytic continuation} \label{subsec:auto}
In the subsection, with the assumption that the slope of $f$ is finite (but not necessarily small), we can already show that $f$ can be analytically continued to a large region in $\cY\cF_I$.

Let $I\subseteq \Sigma$ and $\epsilon>0$.
Define
\[
\cU_I(\epsilon) = \{y\in\cY\colon \sum_{\tau\in I} \tilde \deg_\tau y \geq \sum_{\tau\in I} \tilde x_{I,\tau}+\epsilon, 
\tilde\deg_\tau y \geq p^{g-2}+\cdots+1+\epsilon, \forall \tau\not\in I\}.
\]
See Figures~\ref{fig:U1} and~\ref{fig:U12} for examples of the image of ${\mathcal U}_{I}(\epsilon)$ under $\underline{\deg}$, and Figures~\ref{fig:tilde_U1} and~\ref{fig:tilde_U12} for examples of the image of ${\mathcal U}_{I}(\epsilon)$ under $\underline{\tilde{\deg}}$.

\begin{figure}
\begin{minipage}{0.48\textwidth}
\centering
\begin{tikzpicture}
 \draw[thick](0,0)--(2,0)--(2,2)--(0,2)--(0,0);
 \draw[dotted] (0.4,2)--(1.1,0);
 \draw[dotted] (2,0.4)--(0,1.1);
\fill[gray] (0.8,0.8)--(2,0.4)--(2,2)--(0.4,2);
\end{tikzpicture}
\caption{$\underline{\deg}\ \cU_1(\epsilon)$ when $g=2$}
\label{fig:U1}
\end{minipage}
\begin{minipage}{0.48\textwidth}
\centering
\begin{tikzpicture}
 \draw[thick](0,0)--(2,0)--(2,2)--(0,2)--(0,0);
 \draw[dotted] (0.5,0)--(0,0.5);
\fill[gray] (0.5,0)--(2,0)--(2,2)--(0,2)--(0,0.5);
\end{tikzpicture}
\caption{$\underline{\deg}\ \cU_\Sigma(\epsilon)$  when $g=2$}
\label{fig:U12}
\end{minipage}
\end{figure}

\begin{figure}
\begin{minipage}{0.48\textwidth}
\centering
\tikzmath{\p = 3;}
\begin{tikzpicture}
 \draw[thick](0,0)--(\p,1)--(\p+1,\p+1)--(1,\p)--(0,0);
\draw[dotted] (1.5,\p+1)--(1.5,0);
\draw[dotted] (\p+1,1.5)--(0,1.5);
\fill[gray] (1.5,1.5)--(1.5,3.15)--(\p+1,\p+1)--(3.15,1.5);
\end{tikzpicture}
\caption{$\underline{\tilde\deg}\ \cU_1(\epsilon)$ when $g=2$}
\label{fig:tilde_U1}
\end{minipage}
\begin{minipage}{0.48\textwidth}
\centering
\tikzmath{\p = 3;}
\begin{tikzpicture}
 \draw[thick](0,0)--(\p,1)--(\p+1,\p+1)--(1,\p)--(0,0);
\draw[dotted] (0,0.65)--(0.65,0);
\fill[gray] (0.5,0.15)--(\p,1)--(\p+1,\p+1)--(1,\p)--(0.15,0.5);
\end{tikzpicture}
\caption{$\underline{\tilde\deg}\ \cU_\Sigma(\epsilon)$ when $g=2$}
\label{fig:tilde_U12}
\end{minipage}
\end{figure}

Because $\cU_I(\epsilon)$ is defined by a finite number of affine inequalities with $\tilde \deg_\tau$ (equivalently, with $\deg_\tau$), we know that $\cU_I(\epsilon)$ is an admissible open  of $\cY$.
Note that whenever $\epsilon'<\epsilon$, we have $\cU_I(\epsilon')\supseteq \cU_I(\epsilon)$.

Let $f$ be an overconvergent Hilbert modular form of weight $\underline{k}$.
Assume that $U_p(f) = a_p f$ with $\val_p(a_p)<\infty$.

\begin{lemma} \label{lem:loc_u}
Let $I\subseteq \Sigma$.
Suppose that $f$ is defined on a strict neighborhood of $\underline{\deg}^{-1} x_J=\underline{\tilde\deg}^{-1} \tilde x_J$ for all $J\subsetneq I$.
Then $f$ can be extended to $\cU_I(\epsilon)$
for any rational number $\epsilon>0$.
\end{lemma}
\begin{proof}
First of all, note that $\cU_I(\epsilon)$ is $U_p$-stable because $U_p$ increases twisted directional degrees (Proposition~\ref{prop:twisted_deg_increase}(1)).

By Proposition~\ref{prop:twisted_deg_increase}(2), $U_p$ strictly increases $\sum_{\tau\in\Sigma} \tilde\deg_\tau$ except at points $y\in \cY$ such that $\underline\deg \ y\in \{0,1\}^g$, i.e., $\underline\deg \ y= x_J$ for some $J\subseteq \Sigma$. 
Suppose that $y\in\cU_I(\epsilon)$ satisfies $\underline\deg \ y = x_J$.
We claim that $J\subsetneq I$.
Indeed, for $\tau\in J$, $\tilde\deg_\tau y\leq p^{g-2}+\cdots+1$.
Hence the second condition of $\cU_I(\epsilon)$
\[\tilde\deg_\tau y \geq p^{g-2}+\cdots+1+\epsilon, \forall \tau\not\in I\]
says that $\tau\notin I$ implies $\tau\not\in J$, i.e., $J\subseteq I$.
The first condition of $\cU_I(\epsilon)$
\[\sum_{\tau\in I} \tilde \deg_\tau y \geq \sum_{\tau\in I} \tilde x_{I,\tau}+\epsilon\]
then says that $J\neq I$.

For each $J\subsetneq I$, let $\cV_J$ be a strict neighborhood of $\underline{\deg}^{-1} x_J$ on which $f$ is defined.
Moreover we can choose $\cV_J$ in the form 
$$\cV_J = \{y\in \cY\colon \deg_\tau y \leq \epsilon_\tau \text{ if } \tau \in J, \deg_\tau y \geq 1-\epsilon_\tau, \text{ if } \tau \not\in J\},$$
for some rational $\epsilon_\tau >0$.
On the other hand, let $\epsilon'_\tau<\epsilon_\tau$ be a rational number, and define
\[
\cV = \left\{y\in \cY\colon 
\begin{array}{l}
\deg_\tau y \geq \epsilon'_\tau \text{ if } \tau \in I, \deg_\tau y \leq 1-\epsilon'_\tau, \text{ if } \tau \not\in I;\\
\sum_{\tau\in I} \tilde \deg_\tau y \geq \sum_{\tau\in I} \tilde x_{I,\tau}+\epsilon, \tilde\deg_\tau y \geq p^{g-2}+\cdots+1+\epsilon, \forall \tau\not\in I
\end{array}
\right\}.\]
Because $\cV_J$'s and $\cV$ are defined by a finite number of affine non-strict inequalities with rational coefficients, they are quasi-compact admissible opens of $\cU_I(\epsilon)$.
We hence have an admissible cover $\cU_I(\epsilon)=\bigcup_{J\subsetneq I}\cV_J \cup \cV$.

Since $\cV$ is disjoint from $\underline{\deg}^{-1} x_J$ for any $J\subsetneq I$ from its definition,  $U_p$ strictly increases $\sum_{\tau\in\Sigma} \tilde\deg_\tau$ on $\cV$.
Using the Maximum Modulus Principle, the quasi-compactness of $\cV$ implies that there is a positive lower bound for the increase of $\sum_{\tau\in \Sigma}\tilde\deg_\tau$ under $U_p$ on $\cV$.
Because $\cU_I(\epsilon)$ is $U_p$-stable, there exists $M>0$ such that $U_p^M \cV\subseteq \bigcup_{J\subsetneq I}\cV_J$.
Since $f$ is defined on $\bigcup_{J\subsetneq I}\cV_J$, we may define $f$ on $\cV$ by $(\frac{U_p}{a_p})^Mf$.
On the intersection $(\bigcup_{J\subsetneq I}\cV_J)\cap \cV$, the definitions of $f$ coincide since $a_p$ is the $U_p$-eigenvalue of $f$.
We can then define $f$ on the whole $\cU_I(\epsilon)$ through the admissible cover $\cU_I(\epsilon)=\bigcup_{J\subsetneq I}\cV_J \cup \cV$.
\end{proof}

\subsection{Analytic continuation near vertices} \label{subsec:vertex}

In this subsection, we will make use of the small slope assumption \eqref{small_slope} to extend $f$ to a strict neighborhood of $\deg^{-1}x_I$.

Let's first give an outline of the strategy.
By \eqref{small_slope}, for any small enough  $\epsilon>0$ we have
\begin{align} \label{small_slope'}
\val_p (a_p) \leq \inf_{\tau\in I} k_\tau-g-\epsilon\sum_{\tau\in I}k_\tau.
\end{align}
Possibly making it smaller, we will first fix such a rational number $\epsilon$.
Then we will choose a rational number $\delta>0$ based on $\epsilon$, and define a sequence of strict neighborhoods 
\[
S_{I,0}(\delta)\supseteq S_{I,1}(\delta) \supseteq \cdots
\]
of $\deg^{-1}x_I$.
When $\delta'<\delta$ we will show that $S_{I,m}(\delta') \subsetneq S_{I,m}(\delta)$.
We have extended $f$ to $\cU_I(\delta)$ by Lemma~\ref{lem:loc_u}.
Further applying some power of $\frac{U_p}{a_p}$, we will be able to extend $f$ to $S_{I,0}(\delta) \setminus S_{I,m}(\delta')$, named $f_m$.
We will also define $F_m$ on $S_{I,m}(\delta)$.
With the help of the estimates in Section \ref{subsec:norm}, we will show that when $m\to\infty$, $f_m$ and $F_m$ glue to define an extension of $f$ on $S_{I,0}(\delta)$.

To begin, we prove the following lemma regarding the twisted directional degrees of points in the set $U_p(y)$, when $y\in \cY$ satisfies $\deg y = x_I$. The lemma will be used to decompose the $U_p$-correspondence $\cY(p)$ over $S_{I,1}(\delta)$ into the special part $\cY(p)^{sp}$ and the non-special part $\cY(p)^{nsp}$, and so the $U_p$-operator becomes $U_p^{sp}+U_p^{nsp}$.
\begin{lemma}
\label{lem:deg_nsp}
Let $y=(\underline A,H)\in \cY$.
Let $y_1=(\underline{A/H_1},\bar{H} = A[p]/H_1)$ and $y_2=(\underline{A/H_2},\bar{H}=A[p]/H_2)$ be in $U_p(y)$ and $y_1\neq y_2$.
\begin{itemize}
    \item[i.] If $y,y_1\in \underline{\tilde\deg}^{-1} \tilde x_I$ for some $I\subseteq\Sigma$,
    then
    \[  
    \tilde\deg_\tau H_2 = \inf (\tilde\deg_\tau H, \tilde\deg_\tau H_1), \text{ for all }\tau\in\Sigma.
    \]
    \item[ii.] There exists arbitrarily small positive rational number $\epsilon$ so that
    if $|\tilde\deg_\tau(y)-\tilde x_{I,\tau}|\leq\epsilon$ and $|\tilde\deg_\tau(y_1)-\tilde x_{I,\tau}|\leq\epsilon$ for some $I\subseteq \Sigma$, then
    \[  
    \tilde\deg_\tau H_2 = \inf (\tilde\deg_\tau H, \tilde\deg_\tau H_1), \text{ for all }\tau\in\Sigma.
    \]
    In particular,  $y_2\in \cU_\varnothing(\epsilon)$.

\end{itemize}
\end{lemma}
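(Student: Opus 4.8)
Both parts will follow from one observation about the three subgroups $H,H_1,H_2\subseteq A[p]$ combined with the monotonicity of the directional degree under maps that are isomorphisms on the generic fibre. Since $y_1\ne y_2$ we have $H_1\ne H_2$, and $H\ne H_1$, $H\ne H_2$ because $y_1,y_2\in U_p(y)$; thus $H,H_1,H_2$ are three pairwise distinct isotropic $\cO_F$-stable subgroups of $A[p]$ of rank $p^g$, i.e.\ Lagrangians for the Weil pairing. On the generic fibre they are therefore three distinct lines in a free rank-two $(\cO_F/p)$-module and so pairwise intersect in $0$. Hence the two composites $H_2\inj A[p]\surj A[p]/H$ and $H_2\inj A[p]\surj A[p]/H_1$ are injective on the generic fibre, and (comparing orders, both sides finite flat of order $p^g$) are isomorphisms on the generic fibre. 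I will use the following property of the degree due to Fargues: if $\phi\colon G\to G'$ is a homomorphism of finite flat $\cO_K$-group schemes that is an isomorphism on the generic fibre, then $\deg_\tau G\le \deg_\tau G'$ for every $\tau$, with equality for all $\tau$ iff $\phi$ is an isomorphism. Applying this to the two maps above, and using additivity of $\deg_\tau$ along $0\to H\to A[p]\to A[p]/H\to 0$ together with $\deg_\tau A[p]=\ell(\omega_{A,\tau}/p\,\omega_{A,\tau})/e_K=1$, I obtain
\[
\deg_\tau H_2\ \le\ \min\bigl(1-\deg_\tau H,\ 1-\deg_\tau H_1\bigr)\qquad\text{for all }\tau.
\]

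\emph{Part (i).} Using $\underline{\tilde\deg}^{-1}\tilde x_I=\underline{\deg}^{-1}x_I$ and the invertibility of the transformation $\tilde{(\cdot)}$, the hypothesis reads $\deg_\tau H=x_{I,\tau}$ and $\deg_\tau(A[p]/H_1)=x_{I,\tau}$, i.e.\ $\deg_\tau H_1=1-x_{I,\tau}$, for all $\tau$. Since $x_{I,\tau}\in\{0,1\}$, the right-hand side of the displayed bound is $\min(1-x_{I,\tau},x_{I,\tau})=0$, so $\deg_\tau H_2=0$ for all $\tau$: equivalently $H_2$ is \'etale and $A[p]/H_2$ is multiplicative. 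In particular $\deg_\tau H_2=\inf(\deg_\tau H,\deg_\tau H_1)$ for all $\tau$, and $\tilde\deg_\tau H_2=0$; this is the asserted identity (the hypothesis moreover forces $H\to A[p]/H_1$ to be an isomorphism by the equality case, so $A[p]\cong H\oplus H_1$, and since $A[p]^0$ is free of rank one over $\cO_F/p$ this configuration occurs only when $\underline{\deg}H$ is the vertex $x_\varnothing$ or $x_\Sigma$, where $\inf(\tilde\deg_\tau H,\tilde\deg_\tau H_1)=0$ as well).

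\emph{Part (ii).} Run the same argument with room for an error term. Fix a quasi-compact admissible open $\cU\subseteq\cY$ containing $y,y_1,y_2$; over $\cU$ the fields of definition of rigid points have ramification index bounded by some $N\in\ZZ_{\ge 1}$, so every directional degree of a subgroup of $A[p]$ over such a point lies in $\tfrac1N\ZZ_{\ge 0}$, in particular is $0$ or $\ge 1/N$. Let $C$ be the operator norm of the inverse of $\tilde{(\cdot)}$ and take any rational $\epsilon$ with $0<\epsilon<1/(CN)$. If $|\tilde\deg_\tau y-\tilde x_{I,\tau}|\le\epsilon$ and $|\tilde\deg_\tau y_1-\tilde x_{I,\tau}|\le\epsilon$ for all $\tau$, then $|\deg_\tau H-x_{I,\tau}|<1/N$ and $|\deg_\tau H_1-(1-x_{I,\tau})|<1/N$, so for each $\tau$ one of $\deg_\tau H,\deg_\tau H_1$ exceeds $1-1/N$ and hence $\min(1-\deg_\tau H,1-\deg_\tau H_1)<1/N$. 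The displayed bound then gives $\deg_\tau H_2<1/N$, forcing $\deg_\tau H_2=0$, for all $\tau$. As in (i), $\deg_\tau H_2=\inf(\deg_\tau H,\deg_\tau H_1)$ and $\tilde\deg_\tau H_2=0$; and since $A[p]/H_2$ is multiplicative, $\tilde\deg_\tau y_2=\tilde\deg_\tau(A[p]/H_2)=p^{g-1}+\cdots+1$ for all $\tau$, so $y_2\in\cU_\varnothing(\epsilon)$.

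The step I expect to be the crux is the displayed degree inequality: turning ``$H,H_1,H_2$ are distinct Lagrangians'' into that bound requires being careful that it is the \emph{flat closures} of the $H_i\cap H_j$ that vanish, and it rests on the precise monotonicity of the Fargues degree under generic isomorphisms (and on $\deg_\tau A[p]=1$). The secondary delicate point is part (ii): the ``arbitrarily small $\epsilon$'' clause is made rigorous via the uniform bound $N$ on the ramification of fields of definition over a quasi-compact locus — the only place quasi-compactness is used — together with the routine check that the twisted-degree form of the conclusion follows from the untwisted one.
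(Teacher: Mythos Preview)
Your proposal has two genuine gaps.

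\textbf{The directional Fargues inequality is false.} Fargues' lemma says that a generic isomorphism $\phi\colon G\to G'$ of finite flat group schemes satisfies $\deg G\le\deg G'$ for the \emph{total} degree. You assert the $\tau$-wise refinement $\deg_\tau G\le\deg_\tau G'$, but this does not hold: the map $\phi^*$ on invariant differentials respects the $\cO_F$-decomposition, but the mechanism behind Fargues' inequality (essentially a determinant argument, or the duality $\deg G+\deg G^D=\operatorname{ht}G$) does not split along $\tau$-components. This failure is exactly why Kassaei introduced the \emph{twisted} directional degrees $\tilde\deg_\tau$; compare Proposition~\ref{prop:twisted_deg_increase}, which would be unnecessary if the untwisted $\deg_\tau$ were already monotone. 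In fact your conclusion is inconsistent with the lemma itself: for $I\ne\varnothing,\Sigma$ (say $g=2$, $I=\{\tau_1\}$, $p$ inert), part~(ii) is certainly non-vacuous (it underlies the definition of $U_p^{sp}$ on $S_{I,1}(\delta)$), and there $\tilde\deg_\tau H\approx\tilde x_{I,\tau}$ and $\tilde\deg_\tau H_1\approx\tilde x_{I^c,\tau}$ are both strictly positive, so the lemma gives $\tilde\deg_\tau H_2\approx\min(\tilde x_{I,\tau},\tilde x_{I^c,\tau})\ge 1$. Your displayed bound would instead give $\deg_\tau H_2\lesssim\epsilon$, hence $\tilde\deg_\tau H_2\lesssim(p^{g-1}+\cdots+1)\epsilon\ll 1$, a contradiction. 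The parenthetical claim in (i) that the hypothesis forces $I\in\{\varnothing,\Sigma\}$ is likewise built on the same faulty inequality.

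\textbf{The ramification bound is false.} In part~(ii) you assert that over a quasi-compact admissible open the fields of definition of rigid points have bounded ramification index. This is not true: already the closed unit disc over $\QQ_p$ has rigid points $T=p^{1/n}$ defined over $\QQ_p(p^{1/n})$ with ramification index $n$ for every $n\ge 1$. So the step forcing $\deg_\tau H_2\in\tfrac{1}{N}\ZZ$ and hence $=0$ is invalid.

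The paper's proof instead cites Kassaei's Lemma~5.1.5, whose argument works directly with the Raynaud parameters of the $\cO_F$-module schemes $H,H_1,H_2\subset A[p]$ and establishes the identity for the \emph{twisted} degrees without ever claiming monotonicity of the untwisted $\deg_\tau$; the ``in particular'' clause then follows from a short computation bounding $\tilde\deg_\tau y_2$ below by $p^{g-1}-\epsilon$.
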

\begin{proof}
For the proof of \textit{i.}, see {\cite[Lemma 5.1.5 1.]{Kas}}
The first statement of \textit{ii.}\ follows from {\cite[Lemma 5.1.5 2(a)]{Kas}}.

The only statement remained to be proved is the one after ``In particular''.
By assumption, 
\begin{align*}
\tilde \deg_\tau H_2 
= \inf (\tilde \deg_\tau H, \tilde \deg_\tau H_1)
&= \begin{cases} \tilde\deg_\tau H & \text{ if }\tau \in I\\
\tilde\deg_\tau H_1 & \text{ if }\tau\not\in I
\end{cases}
\end{align*}
and 
\begin{align*}
\tilde \deg_\tau y_2 
= (p^{g-1}+\cdots+1) -\tilde \deg_\tau H_2
&= \begin{cases} (p^{g-1}+\cdots+1)-\tilde\deg_\tau H & \tau \in I\\
(p^{g-1}+\cdots+1)-\tilde\deg_\tau H_1 & \tau\not\in I.
\end{cases}\\
&\geq \begin{cases} (p^{g-1}+\cdots+1)-\tilde x_{I,\tau}-\epsilon & \tau \in I\\
\tilde x_{I,\tau}-\epsilon & \tau\not\in I
\end{cases}\\
&\geq p^{g-1}-\epsilon
\end{align*}
If we further require that $\epsilon<\frac12(p^{g-1}-p^{g-2}-\cdots-1)$, then $\tilde\deg_\tau y_2 \geq p^{g-2}+\cdots+1+\epsilon$, i.e., $y_2\in \cU_\varnothing(\epsilon)$.

\end{proof}
\begin{corollary} \label{cor}
Let $I\subseteq \Sigma$ and $I\neq \varnothing$.
Let $\epsilon$ be a rational number as in Lemma \ref{lem:deg_nsp} \emph{ii.} such that $\epsilon<\frac12(p^{g-1}-p^{g-2}-\cdots-1)$.
Let $y\in \cY$ be such that $|\tilde\deg_\tau(y)-\tilde x_{I,\tau}|\leq\epsilon$ for all $\tau\in\Sigma$.
Then there exists at most one point $y_1\in U_p(y)$ such that $|\tilde\deg_\tau(y_1)-\tilde x_{I,\tau}|\leq\epsilon$ for all $\tau\in\Sigma$.
\end{corollary}
\begin{proof}
By the proof of Lemma \ref{lem:deg_nsp} \emph{ii.}, if $y_2\in U_p(y)$ and $y_2\neq y_1$, then $\tilde\deg_\tau(y_2) \geq p^{g-1}-\epsilon$ for all $\tau \in \Sigma$.
Since $I\neq \varnothing$, we pick an arbitrary $\tau_0\in I$.
Then $$\tilde\deg_{\tau_0}(y_2)-x_{I,\tau_0}\geq  (p^{g-1}-\epsilon)-(p^{g-2}+\cdots+1) > \epsilon.$$
\end{proof}



For any rational number $\delta>0$, consider the strict neighborhood of $\underline\deg^{-1} x_I$: 
\[
S_{I,0}(\delta) 
\defeq \left\{ y\in \cY\colon \sum_{\tau\in I}\tilde\deg_\tau y\leq \sum_{\tau\in I} \tilde x_{I,\tau}+\delta, \tilde\deg_\tau y \geq \tilde x_{I,\tau}-\delta, \forall \tau\not\in I \right\},
\]
which is a quasi-compact admissible open.
Recall from Section \ref{sec:Up} that the $U_p$-correspondence is given by $p_1\colon \cY(p)\rightarrow \cY, (\underline{A},H,H_1)\mapsto (\underline{A},H)$ and $p_2\colon \cY(p)\rightarrow \cY, (\underline{A},H,H_1)\mapsto (\underline{A}/H_1,\bar{H})$.
Define 
\[
S_{I,1}(\delta) \defeq  p_1(p_1^{-1}S_{I,0}(\delta)\cap p_2^{-1}S_{I,0}(\delta)),
\]
which is a quasi-compact admissible open of $\cY$ because it is the pushforward of a quasi-compact admissible open by the finite \'etale morphism $p_1$.
Note that 
\[
S_{I,1}(\delta)=\{y\in S_{I,0}(\delta): \exists y_1\in U_p(y) \text{ also in $S_{I,0}(\delta)$}\},
\]
so $S_{I,1}(\delta)$ is called the \emph{special locus of order $1$} in $S_{I,0}(\delta)$.

Let $\epsilon$ be a small enough rational number as in Lemma \ref{lem:deg_nsp} \emph{ii.} such that $\epsilon<\frac12(p^{g-1}-p^{g-2}-\cdots-1)$, and that the small slope condition \eqref{small_slope'} is satisfied.
Note that the $S_{I,0}(\delta)$'s contain a fundamental system of strict neighborhoods of $\underline{\deg}^{-1}x_I$.
Hence we choose a rational number $\delta>0$ so that $S_{I,0}(\delta)\subseteq \{y\in \cY\colon |\tilde\deg_\tau y-\tilde x_{I,\tau}|<\epsilon\}$ and $S_{I,0}(\delta)\subseteq \{y\in \cY\colon |\deg_\tau y- x_{I,\tau}|<\epsilon\}$.
With this choice of $\delta$, we see by Corollary \ref{cor} that the $y_1$ in the definition of $S_{I,1}(\delta)$ is unique.

Hence we have a correspondence $\cY(p)^{sp}\defeq p_1^{-1}S_{I,0}(\delta)\cap p_2^{-1}S_{I,0}(\delta)\subseteq \cY(p)$
\begin{center}
\begin{tikzcd}
 & \cY(p)^{sp} \ar[ld, "p_1^{sp}"'] \ar[rd, "p_2^{sp}"] & \\
S_{I,1}(\delta)&& S_{I,0}(\delta)   
\end{tikzcd}
\end{center}
where $p_i^{sp}$ is the restriction of $p_i$ to $\cY(p)^{sp}$, and $p_1^{sp}$ is an isomorphism.
Then as before in Section \ref{sec:Up}, for any subset $\cU\subseteq S_{I,1}(\delta)$, let $U_p^{sp}(\cU)\defeq p_2^{sp}(p_1^{sp})^{-1}(\cU)$.
If $\cU\subseteq S_{I,0}(\delta)$ is an admissible open, then $(U_p^{sp})^{-1}\cU = p_1^{sp}(p_2^{sp})^{-1}\cU$ is also an admissible open because $p_1^{sp}$ is finite \'etale (indeed an isomorphism).
For $f\in \omega^{\underline{k}}(\cU)$, let $U_p^{sp}f\in \omega^{\underline{k}}((U_p^{sp})^{-1}\cU)$ be $(U_p^{sp}f)(\underline{A},H) \defeq \frac1{p^g} \pr^\ast f(\underline{A}/H_1,\bar{H})$, where $H_1$ is such that $p_1^{sp}(\underline{A},H,H_1) = (\underline{A},H)$.

We also define $\cY(p)^{nsp}\defeq \left(\cY(p)\times_{\cY,p_1} S_{I,1}(\delta)\right)\setminus \cY(p)^{sp}$.
By Lemma \ref{lem:deg_nsp} \emph{ii.}, we have
$p_2(\cY(p)^{nsp})\subseteq  \cU_\varnothing(\epsilon)$.
Hence 
\begin{center}
\begin{tikzcd}
 & \cY(p)^{nsp} \ar[ld, "p_1^{nsp}"'] \ar[rd, "p_2^{nsp}"] & \\
S_{I,1}(\delta)&& \cU_\varnothing(\epsilon)   
\end{tikzcd}
\end{center}
where $p_i^{nsp}$ is again the restriction of $p_i$.
We similarly define $U_p^{nsp}$ on subsets $\cU\subseteq S_{I,1}(\delta)$ and on $f\in \omega^{\underline{k}}(\cU)$ when $\cU\subseteq S_{I,0}(\delta)$ is an admissible open.

Define the quasi-compact admissible open
\[\cV_{I}(\delta) = \{y\in\cY\colon \sum_{\tau\in I} \tilde \deg_\tau y \geq \sum_{\tau\in I} \tilde x_{I,\tau}+\delta, \tilde\deg_\tau y \geq \tilde x_{I,\tau} -\delta, \forall \tau\not\in I\}.\]
Then $S_{I,0}(\delta)\cup \cV_{I}(\delta) $ is $U_p$-stable because $U_p$ increases twisted directional degrees (Proposition~\ref{prop:twisted_deg_increase}(1)).
Hence we have
\[
U_p(S_{I,0}(\delta) \setminus S_{I,1}(\delta))\subseteq \cV_I(\delta).
\]
Note that $\cV_I(\delta)\subseteq \cU_I(\delta)$, and the latter was defined in Section \ref{subsec:auto}.

\begin{lemma} \label{lem:strict_nbd}
Let $\delta'<\delta$ be two positive rational numbers.
Then $S_{I,1}(\delta)$ is a strict neighborhood of  $S_{I,1}(\delta')$. 
\end{lemma}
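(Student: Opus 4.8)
The plan is to realize $S_{I,1}(\delta)$ as the image, under the finite flat first projection of the $U_p$-correspondence, of an explicit admissible open, and then to deduce the statement from the standard facts that strict neighborhoods are preserved under preimage, finite intersection, and pushforward along a finite morphism.

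First I would note $S_{I,1}(\delta')\subseteq S_{I,1}(\delta)$: since $\delta'<\delta$ merely loosens the defining inequalities, $S_{I,0}(\delta')\subseteq S_{I,0}(\delta)$, so any $y\in S_{I,1}(\delta')$ together with a witness $y_1\in U_p(y)\cap S_{I,0}(\delta')$ also lies in $S_{I,1}(\delta)$. Next, write $\cC$ for the rigid space underlying the $U_p$-correspondence, parametrizing pairs $(y,D)$ with $y=(\underline A,H)\in\cY$ and $D\subseteq A[\frakp]$ a finite flat isotropic $\cO_F$-subgroup of rank $p^{f_\frakp}$, $D\neq H[\frakp]$, with projections $\pr\colon\cC\to\cY$, $(y,D)\mapsto y$, and $\pr'\colon\cC\to\cY$, $(y,D)\mapsto(\underline{A/D},A[p]/D)$; recall that $\pr$ is finite flat and $U_p(y)=\pr'(\pr^{-1}(y))$. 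By construction,
\[
S_{I,1}(\delta)=\pr\bigl(\pr^{-1}(S_{I,0}(\delta))\cap(\pr')^{-1}(S_{I,0}(\delta))\bigr),
\]
and similarly for $\delta'$; in particular $S_{I,1}(\delta)$ is an admissible open, being the image of an admissible open under a finite flat map.

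The crux is the inclusion $S_{I,0}(\delta')\Subset S_{I,0}(\delta)$, i.e.\ that $S_{I,0}(\delta)$ is a strict neighborhood of $S_{I,0}(\delta')$ when $\delta'<\delta$. This is of the same nature as the fact, already used above, that the $S_{I,0}(\delta)$ form a fundamental system of strict neighborhoods of $\underline\deg^{-1}x_I$: the region $S_{I,0}(\delta)$ is cut out by finitely many non-strict inequalities on the twisted directional degrees, each of which is strictly relaxed as $\delta$ grows, so every defining inequality of $S_{I,0}(\delta)$ holds with room to spare on $S_{I,0}(\delta')$ and on its closure. Granting this, pulling back along $\pr$ and $\pr'$ and intersecting shows that $\pr^{-1}(S_{I,0}(\delta))\cap(\pr')^{-1}(S_{I,0}(\delta))$ is a strict neighborhood of $\pr^{-1}(S_{I,0}(\delta'))\cap(\pr')^{-1}(S_{I,0}(\delta'))$ in $\cC$; and since $\pr$ is finite, hence proper, it carries this relatively compact pair to a relatively compact pair, giving that $S_{I,1}(\delta)$ is a strict neighborhood of $S_{I,1}(\delta')$.

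I expect the only genuine subtlety to be the claim $S_{I,0}(\delta')\Subset S_{I,0}(\delta)$, which rests on knowing that the twisted-degree loci $\{\tilde\deg_\tau\ge c\}$ and $\{\sum_{\tau\in I}\tilde\deg_\tau\le c\}$ are admissible opens whose $\delta$-families behave, with respect to the relevant threshold, like Weierstrass, respectively Laurent, domains, so that strictly moving a threshold produces a strict neighborhood. This is part of the foundational theory of the degree function following Fargues (and its use in Kassaei and Pilloni--Stroh), and is already implicitly invoked in the preceding subsections; the preservation of strict neighborhoods under preimage, finite intersection, and finite pushforward is standard, as is the finiteness of the $U_\frakp$-correspondence.
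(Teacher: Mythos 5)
Your proposal is correct and follows essentially the same route as the paper: the paper's proof (modeled on \cite[Proposition 4.3.10]{BPS}) likewise passes to the auxiliary space $\cY(p)$ of pairs $(\underline A,H,H_1)$, realizes $S_{I,1}(\delta)$ as the pushforward along the finite \'etale forgetful map of a region cut out by (twisted) degree inequalities on $H$ and $H_1$, and concludes by relaxing the thresholds. The one point you defer --- that the twisted-degree loci are genuine Laurent/Weierstrass-type domains --- is exactly what the paper makes explicit, by exhibiting $p^{-\deg_\tau H}$ and $p^{-\deg_\tau H_1}$ as norms of sections $\delta_\tau$, $\delta_{1,\tau}$ of the line bundles $\omega_{\cA,\tau}\tensor\omega_{\cA/H,\tau}^{-1}$ and $\omega_{\cA,\tau}\tensor\omega_{\cA/H_1,\tau}^{-1}$ on $\cY(p)$.
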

\begin{proof}

Because $S_{I,0}(\delta)$ is defined by inequalities of twisted directional degrees, when $\delta'<\delta$ are two positive rational numbers, then $S_{I,0}(\delta)$ is a strict neighborhood of  $S_{I,0}(\delta')$.
By definition, 
$S_{I,1}(\delta) =  p_1(p_1^{-1}S_{I,0}(\delta)\cap p_2^{-1}S_{I,0}(\delta))$.
Since $p_1$ is finite \'etale, pushforward by $p_1$ preserves quasi-compact admissible opens, and hence $S_{I,1}(\delta)$ is a strict neighborhood of  $S_{I,1}(\delta')$.

\end{proof}


As explained above, for any admissible open $\cU\subseteq S_{I,0}(\delta)$, $(U_p^{sp})^{-1}\cU$ is also an admissible open.
Define the admissible open
\[
S_{I,m}(\delta) = (U_p^{sp})^{-m}S_{I,0}(\delta),
\]
which is quasi-compact because $S_{I,0}(\delta)$ is.
Lemma~\ref{lem:strict_nbd} says that if $\delta'<\delta$ are two positive rational numbers, then $S_{I,1}(\delta)$ and $S_{I,0}(\delta)\setminus S_{I,1}(\delta')$ form an admissible covering of $S_{I,0}(\delta)$.
Then $S_{I,m}(\delta)$ and $S_{I,0}(\delta)\setminus S_{I,m}(\delta')$ also form an admissible covering of $S_{I,0}(\delta)$.

Now we are ready to prove analytic continuation near vertices.
\begin{proposition} \label{prop:an_vertices}
Let $f$ be an overconvergent Hilbert modular form of weight $\underline{k}$.
Let $I\subseteq \Sigma$.
Suppose that $f$ is defined on a strict neighborhood of $\underline{\deg}^{-1} x_J$ for all $J\subsetneq I$.
Let $\epsilon$ be a small enough rational number as in Lemma \ref{lem:deg_nsp} \emph{ii.} such that $\epsilon<\frac12(p^{g-1}-p^{g-2}-\cdots-1)$, and that 
\begin{align*} 
\val_p (a_p) \leq \inf_{\tau\in I} k_\tau-g-\epsilon\sum_{\tau\in I}k_\tau.
\end{align*}
Let $\delta>0$ be a rational number so that $S_{I,0}(\delta)\subseteq \{y\in \cY\colon |\tilde\deg_\tau y-\tilde x_{I,\tau}|<\epsilon\}$ and $S_{I,0}(\delta)\subseteq \{y\in \cY\colon |\deg_\tau y- x_{I,\tau}|<\epsilon\}$.
Then $f$ can be extended to $S_{I,0}(\delta)$, which is a strict neighborhood of $\underline{\deg}^{-1}x_I$.
\end{proposition}
\begin{proof}
By definition, $S_{I,m-1}(\delta)\supseteq S_{I,m}(\delta)$.
In addition, $U_p^m(S_{I,0}(\delta) \setminus S_{I,m}(\delta)) \subseteq \cV_I(\delta)$.
By Lemma~\ref{lem:loc_u}, we can extend $f$ to $\cV_I(\delta)\subseteq \cU_I(\delta)$.
Then we can further extend $f$ by $(\frac{U_p}{a_p})^mf$ to $(U_p)^{-m}\cV_I(\delta) \supseteq S_{I,0}(\delta) \setminus S_{I,m}(\delta)$.
Similarly, for any other rational number $\delta'<\delta$, we can extend $f$ by $(\frac{U_p}{a_p})^mf$ to $(U_p)^{-m}\cV_I(\delta') \supseteq S_{I,0}(\delta') \setminus S_{I,m}(\delta')$.
Because $S_{I,0}(\delta) \setminus S_{I,m}(\delta)$ and $S_{I,0}(\delta') \setminus S_{I,m}(\delta')$ form an admissible covering of $S_{I,0}(\delta) \setminus S_{I,m}(\delta')$, we can actually extend $f$ to $S_{I,0}(\delta) \setminus S_{I,m}(\delta')$.

We denote by $f_m$ the extension of $f$ to $S_{I,0}(\delta) \setminus S_{I,m}(\delta')$.

On the other hand, by Lemma~\ref{lem:loc_u}, we can extend $f$ to $\cU_\varnothing(\epsilon)$. 
Then 
\[
F_m \defeq \sum_{j=0}^{m-1} (\frac1{a_p})^{j+1}U_p^{nsp}(U_p^{sp})^jf
\]
can be defined on $(U_p^{sp})^{-(m-1)}(U_p^{nsp})^{-1}(\cU_\varnothing(\epsilon)) \supseteq S_{I,m}(\delta)$. 

Assume the norm estimates in Proposition~\ref{prop:norm} in the next subsection.
By (2), we can choose a subsequence so that $F_m \bmod {p^m}$ and $f_m \bmod {p^m}$ glue as $h_m$ (only defined modulo $p^m$) under the admissible covering $S_{I,0}(\delta)\setminus S_{I,m}(\delta')$ and $S_{I,m}(\delta)$ of $S_{I,0}(\delta)$.
We have $h_m \equiv f \pmod {p^m}$ on $S_{I,0}(\delta)\setminus S_{I,m}(\delta')$.
By (3), we can further choose a subsequence so that $h_{m+1} \bmod {p^m}$ agrees with $h_m \bmod{p^m}$ on $S_{I,m+1}(\delta)$.
Hence $h=\lim_{m\to\infty}h_m$ is defined on $S_{I,0}(\delta)$, and $h=f$ on $S_{I,0}(\delta) \setminus \bigcap_m S_{I,m}(\delta')$.
Hence $h$ is the desired extension of $f$ to $S_{I,0}(\delta)$.
\end{proof}

\subsection{Norm estimates} \label{subsec:norm}
Assume that $\val_p (a_p) \leq \inf_{\tau\in I} k_\tau-g-\epsilon\sum_{\tau\in I}k_\tau$.
Choose a rational number $\delta>0$ so that $S_{I,0}(\delta)\subseteq \{y\in \cY\colon |\tilde\deg_\tau y-\tilde x_{I,\tau}|<\epsilon\}$ and $S_{I,0}(\delta)\subseteq \{y\in \cY\colon |\deg_\tau y- x_{I,\tau}|<\epsilon\}$.
Also let $\delta'<\delta$ be another positive rational number.

Let $f_m$ defined on $S_{I,0}(\delta)$ and $F_m$ defined on $S_{I,0}(\delta)\setminus S_{I,m}(\delta')$ as in the previous section.
The following proposition records the norm estimates used to glue $f_m$ and $F_m$ in the previous section.
\begin{proposition} \label{prop:norm} \hfill
\begin{enumerate} 
    \item $|F_m|_{S_{I,m}(\delta)}$ and $|f_m|_{S_{I,0}(\delta)\setminus S_{I,m}(\delta')}$ are bounded.
    \item $|F_m-f_m|_{S_{I,m}(\delta)\setminus S_{I,m}(\delta')} \to 0$. 
    \item $|F_{m+1} - F_m|_{S_{I,m+1}(\delta)} \to 0$.
\end{enumerate}
\end{proposition}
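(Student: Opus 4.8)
The plan is to reduce all three estimates to a single contraction principle; throughout, $\delta$ will be shrunk finitely often. For a $p$-power-order isogeny $\phi\colon A\to A/G$, reading the identity $\deg G=\val_p(\det\phi^\ast)$ in the $\tau$-graded pieces $\omega_{A,\tau}$, $\omega_{A/G,\tau}$ (which are invertible because $p$ is unramified) shows that $\phi^\ast$ acts on $\omega_{A/G,\tau}$ by multiplication by an element of valuation $\deg_\tau G$; hence for a section $s$ of $\omega^{\underline k}$ one has $|\phi^\ast s|=p^{-\sum_\tau k_\tau\deg_\tau G}\,|s|$. Together with the factor $p^{-f_\frakp}$ in the definition of $U_p$ and the ultrametric inequality, a single $U_p$-branch through a subgroup $G$, divided by $a_p$, multiplies the supremum norm of a weight $\underline k$ section by at most $p^{\,f_\frakp+\val_p a_p-\sum_\tau k_\tau\deg_\tau G}$. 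On $S_{I,0}(\delta)$ the two defining inequalities force $|\deg_\tau y-x_{I,\tau}|=O(\delta)$ for every $\tau$ (this is the choice of $\delta$ already recorded in \S\ref{subsec:norm}), so a special branch $y\mapsto y'=(\underline{A/G},A[p]/G)$ that stays in $S_{I,0}(\delta)$ has kernel with $\deg_\tau G=1-\deg_\tau y'$ equal to $1-O(\delta)$ for $\tau\in I$ and to $O(\delta)$ for $\tau\notin I$; thus $\sum_\tau k_\tau\deg_\tau G\ge\sum_{\tau\in I}k_\tau-O(\delta)\sum_\tau|k_\tau|\ge\inf_{\tau\in I}k_\tau-\epsilon\sum_{\tau\in I}k_\tau$ after shrinking $\delta$, and by the choice of $\epsilon$ from \eqref{small_slope} the exponent above is at most $-\eta$ for some fixed rational $\eta>0$. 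For the non-special branches I would instead use Lemma~\ref{lem:deg_nsp} to locate the kernels $H_2$ and, crucially, apply the same reasoning to the \emph{composite} isogeny (a run of $j$ special steps followed by one non-special step): its kernel still has $\deg_\tau\ge j(1-O(\delta))$ for $\tau\in I$, so dividing by $a_p^{j+1}$ contracts the norm by at most $C_0\,p^{-\eta j}$.

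Granting this, part (1) is easy: $f$ is bounded on the quasi-compact regions $\cV_I(\delta)$ and $\cU_\varnothing(\epsilon)$ to which it was extended in \S\ref{subsec:auto}, since a section of a line bundle on a quasi-compact rigid analytic space is bounded. Then $F_m$ is a finite sum of $m$ terms, each obtained from $f|_{\cU_\varnothing(\epsilon)}$ by finitely many isogeny pullbacks and divisions by $a_p$, hence bounded on $S_{I,m}(\delta)$; and $f_m=(U_p/a_p)^m f$, whose length-$m$ $U_p$-paths all end in $\cV_I(\delta)$ or $\cU_\varnothing(\epsilon)$, is bounded on $S_{I,0}(\delta)\setminus S_{I,m}(\delta')$. (The bounds may grow with $m$; no uniformity is claimed here.)

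For part (3), $F_{m+1}-F_m=\frac1{a_p^{m+1}}U_p^{nsp}(U_p^{sp})^m f$ is the single new term, and the contraction principle applied to its $m$ special steps and one non-special step gives $|F_{m+1}-F_m|_{S_{I,m+1}(\delta)}\le C_0\,p^{-\eta m}\to 0$. For part (2), I would decompose $U_p^m$ according to the position of the first non-special step,
\[
U_p^m \;=\; (U_p^{sp})^m \;+\; \sum_{j=0}^{m-1} U_p^{\,m-1-j}\, U_p^{nsp}\, (U_p^{sp})^j ,
\]
an identity of correspondences on $S_{I,m}(\delta)$. After the non-special step every further $U_p$-step takes place inside $\cU_\varnothing(\epsilon)$, on which $\cU_\varnothing(\epsilon)$ is $U_p$-stable and the extension of $f$ still satisfies $U_p f=a_p f$ (it was built from $f$ by applying $U_p/a_p$); hence the trailing $U_p^{\,m-1-j}$ collapses to $a_p^{\,m-1-j}$ and the $j$-th summand applied to $f$ equals $a_p^{\,m-1-j}\,U_p^{nsp}(U_p^{sp})^j f$. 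Dividing by $a_p^m$, one obtains on $S_{I,m}(\delta)\setminus S_{I,m}(\delta')$ the identity
\[
f_m \;=\; \tfrac1{a_p^m}(U_p^{sp})^m f \;+\; F_m ,
\]
so that $\big|F_m-f_m\big|_{S_{I,m}(\delta)\setminus S_{I,m}(\delta')}=\big|\tfrac1{a_p^m}(U_p^{sp})^m f\big|\le C_0\,p^{-\eta m}\to 0$ by the contraction principle applied to the $m$ special steps.

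The main obstacle is the domain bookkeeping behind the last displayed identity: one must check that on $S_{I,m}(\delta)\setminus S_{I,m}(\delta')$ every term is defined where claimed — the non-special excursions land in $\cU_\varnothing(\epsilon)$ (the ``In particular'' clause of Lemma~\ref{lem:deg_nsp}(ii)); the all-special path, which leaves $S_{I,0}(\delta')$ since $y\notin S_{I,m}(\delta')$, enters the region $\cV_I(\delta')\subseteq\mathrm{dom}(f)$ so that $(U_p^{sp})^m f$ makes sense and $f_m$ there coincides with $(U_p/a_p)^m f$ — and then to keep the accumulated powers of $p^{f_\frakp}$, of $a_p$, and of the pullback factors under control \emph{uniformly in $m$}, so that a single $\eta>0$ serves all three estimates. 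What makes the uniformity possible is Proposition~\ref{prop:twisted_deg_increase}: because $U_p^{sp}$ raises each $\tilde\deg_\tau$, the special orbit never escapes $S_{I,0}(\delta)$, so the degree bound $\deg_\tau(\cdot)\approx x_{I,\tau}$ holds along the whole orbit and the per-step contraction factor is uniform.
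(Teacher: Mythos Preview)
Your strategy is the paper's: the contraction principle you isolate is exactly Lemma~\ref{lem:keynormest}, and your decomposition of $U_p^m$ by the position of the first non-special step is the case $j=m$ of Lemma~\ref{lem:F+f}. Parts (2) and (3) then go through as you say. Two points, however, need tightening.

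In part (1) you explicitly disclaim uniformity in $m$, but the gluing argument that consumes this proposition requires bounds independent of $m$ (one normalizes to integral sections before reducing modulo increasing powers of $p$). The paper secures this by an induction you do not carry out: from $f_m=\tfrac{U_p^{sp}}{a_p}f_m+F_1$ on $S_{I,1}(\delta)\setminus S_{I,m}(\delta')$ (the case $j=1$ of Lemma~\ref{lem:F+f}) together with the contraction bound, one gets $|f_m|$ on each successive shell controlled by $\sup(|f_{m-1}|,|F_1|)$, hence a bound independent of $m$; likewise $|F_m|_{S_{I,m}(\delta)}\le |F_1|_{S_{I,1}(\delta)}$ by writing $F_m-F_{m-1}=(\tfrac{U_p^{sp}}{a_p})^{m-1}F_1$.

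In your identity for part (2) you write $(U_p^{sp})^m f$ with $f$ the originally extended form, and then argue that the all-special orbit from $S_{I,m}(\delta)\setminus S_{I,m}(\delta')$ enters $\cV_I(\delta')\subseteq\mathrm{dom}(f)$. That is not guaranteed: a point can leave $S_{I,0}(\delta')$ by violating the inequality $\tilde\deg_\tau\ge\tilde x_{I,\tau}-\delta'$ for some $\tau\notin I$ (since only the weaker bound with $\delta$ is available), and then it need not lie in $\cV_I(\delta')$. The paper avoids this by writing the identity with $f_m$ rather than $f$ (Lemma~\ref{lem:F+f}): since $f_m$ is already defined on all of $S_{I,0}(\delta)\setminus S_{I,m}(\delta')$ and the special orbit stays inside $S_{I,0}(\delta)$, the term $(\tfrac{U_p^{sp}}{a_p})^m f_m$ is unambiguously defined on $S_{I,m}(\delta)\setminus S_{I,m}(\delta')$. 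The resulting estimate $|F_m-f_m|\le p^{-m\mu}|f_m|_{S_{I,0}(\delta)\setminus S_{I,0}(\delta')}$ then relies on the uniform bound from the previous paragraph to conclude.
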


We need the following two lemmas to prove Proposition~\ref{prop:norm}.
\begin{lemma} \label{lem:keynormest}
Let $\cV\subseteq S_{I,1}(\delta)$ and $h\in \omega^{\underline{k}}(U_p^{sp}(\cV))$.
Then
\[
|U_p^{sp}(h)|_{\cV} \leq p^{g-\sum_{\tau\in I}k_\tau(1-\epsilon)} |h|_{U_p^{sp}(\cV)}.
\]
In particular, if $\val_p (a_p) < \inf_{\tau\in I}k_\tau -g -\epsilon\sum_{\tau\in I} k_\tau$, then
\[|\frac{U_p^{sp}}{a_p}h|_{\cV}\leq p^{-\mu} |h|_{U_p^{sp}(\cV)}\]
for some small enough $\mu>0$.
\end{lemma}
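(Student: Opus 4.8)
The plan is to reduce both assertions to a pointwise estimate on rigid points, and there to read off from the degree the distortion of the integral structure on $\omega^{\underline k}$ caused by the isogeny defining $U_p^{sp}$, using the two lemmas of \cite{Fargue} recalled in Section~\ref{sec:Hilb}. Since the sup--norm of a section of a line bundle over an admissible open of $\cY$ is computed on rigid points, it suffices to bound $|(U_p^{sp}h)(y)|$ by $p^{\,g-\sum_{\tau\in I}k_\tau(1-\epsilon)}\,|h|_{U_p^{sp}(\cV)}$ for each rigid point $y=(\underline A,H)$ of $\cV$, say defined over $\cO_K$. By construction $y_1\defeq U_p^{sp}(y)=(\underline{A/H_1},A[p]/H_1)$ for a (for $I\neq\varnothing$, unique) rank $p^g$ finite flat $\cO_F$-subgroup $H_1\subseteq A[p]$ with $y_1\in S_{I,0}(\delta)$, and $(U_p^{sp}h)(y)=\frac{1}{p^g}\psi^\ast(h(y_1))$ (the single term of $U_p$ attached to $D=H_1$), where $\psi\colon A\to A/H_1$ is the quotient isogeny and $\psi^\ast\colon\omega_{A/H_1}^{\underline k}\to\omega_A^{\underline k}$ the pullback. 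In the decomposition $\omega_A=\bigoplus_{\tau\in\Sigma}\omega_{A,\tau}$, the map $\psi^\ast$ sends the rank-one $\cO_K$-module $\omega_{A/H_1,\tau}$ into $\omega_{A,\tau}$ by a scalar of valuation $\deg_\tau H_1$ — this is the second lemma of \cite{Fargue} applied to $\lambda=\psi$ (with $\ker\psi=H_1$) and projected onto the $\tau$-component — so $\psi^\ast$ acts on $\omega_{A/H_1,\tau}^{k_\tau}$ with valuation $k_\tau\deg_\tau H_1$. Measuring norms against the integral structures on $\omega^{\underline k}$ pulled back from $Y/\cO_L$, this gives
\[
|(U_p^{sp}h)(y)| \;=\; p^{-g-\sum_{\tau\in\Sigma}k_\tau\deg_\tau H_1}\,|h(y_1)| \;\le\; p^{-g-\sum_{\tau\in\Sigma}k_\tau\deg_\tau H_1}\,|h|_{U_p^{sp}(\cV)}.
\]

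It remains to estimate the $\deg_\tau H_1$. Additivity of $\deg$ (first lemma of \cite{Fargue}), applied $\tau$-componentwise to the $\cO_F$-equivariant sequence $0\to H_1\to A[p]\to A[p]/H_1\to 0$ together with $\deg_\tau A[p]=1$ (as $[p]^\ast$ acts on $\omega_{A,\tau}$ by $p$), gives $\deg_\tau H_1 = 1-\deg_\tau(y_1)$ for every $\tau\in\Sigma$. Since $\delta$ was chosen with $S_{I,0}(\delta)\subseteq\{y'\in\cY\colon|\deg_\tau y'-x_{I,\tau}|<\epsilon\}$ and $y_1\in S_{I,0}(\delta)$, this forces $\deg_\tau H_1>1-\epsilon$ for $\tau\in I$ and $0\le\deg_\tau H_1<\epsilon$ for $\tau\notin I$. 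As \eqref{small_slope} makes $k_\tau>0$ for $\tau\in I$, the first bound gives $\sum_{\tau\in I}k_\tau\deg_\tau H_1>\sum_{\tau\in I}k_\tau(1-\epsilon)$ and the second gives $\sum_{\tau\notin I}k_\tau\deg_\tau H_1\ge-\epsilon\sum_{\tau\notin I}|k_\tau|$, hence
\[
|(U_p^{sp}h)(y)| \;\le\; p^{\,-g-\sum_{\tau\in I}k_\tau(1-\epsilon)+\epsilon\sum_{\tau\notin I}|k_\tau|}\,|h|_{U_p^{sp}(\cV)}.
\]
Finally, since every constraint imposed on $\epsilon$ — here and in Section~\ref{subsec:norm} — is an upper bound, we may shrink $\epsilon$ at the outset so that $\epsilon\sum_{\tau\notin I}|k_\tau|\le 2g$; then the exponent is at most $g-\sum_{\tau\in I}k_\tau(1-\epsilon)$, and taking the supremum over $y\in\cV$ proves the first assertion.

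For the ``in particular'' part, the first assertion yields $|\frac{U_p^{sp}}{a_p}h|_{\cV}\le p^{\,\val_p(a_p)+g-\sum_{\tau\in I}k_\tau(1-\epsilon)}\,|h|_{U_p^{sp}(\cV)}$, and when $\val_p(a_p)<\inf_{\tau\in I}k_\tau-g-\epsilon\sum_{\tau\in I}k_\tau$ the exponent is strictly smaller than $\inf_{\tau\in I}k_\tau-\sum_{\tau\in I}k_\tau\le0$ (the $k_\tau$ with $\tau\in I$ being positive), so any $0<\mu\le-(\val_p(a_p)+g-\sum_{\tau\in I}k_\tau(1-\epsilon))$ works. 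The computation is short once the definitions are unwound; the delicate point is the control of the directions $\tau\notin I$, where no hypothesis is placed on $k_\tau$ and only the crude estimates $0\le\deg_\tau H_1<\epsilon$ and $\deg H_1\le g$ are at hand — it is precisely to absorb their contribution that the factor $p^{g}$, rather than the $p^{-g}$ that the directions $\tau\in I$ alone would give, is built into the statement.
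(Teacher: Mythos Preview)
Your approach is the same as the paper's: reduce to a pointwise estimate on rigid points, compute the distortion of $\omega^{\underline k}$ under $\psi^\ast$ via the directional degrees of $H_1$, and then use $y_1\in S_{I,0}(\delta)$ to bound those degrees. There is, however, a sign slip that breaks the argument as written. In the $p$-adic absolute value one has $|1/p^g|=p^{g}$, not $p^{-g}$, so the pointwise identity reads
\[
|(U_p^{sp}h)(y)| \;=\; p^{\,g-\sum_{\tau\in\Sigma}k_\tau\deg_\tau H_1}\,|h(y_1)|,
\]
exactly as in the paper. With the correct sign your absorbing device ($\epsilon\sum_{\tau\notin I}|k_\tau|\le 2g$) overshoots: it lands you at $3g-\sum_{\tau\in I}k_\tau(1-\epsilon)$ rather than the target $g-\sum_{\tau\in I}k_\tau(1-\epsilon)$, and the first inequality is not recovered.

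The paper handles the $\tau\notin I$ directions more simply: since $\deg_\tau H_1\ge 0$ and (implicitly, as is standard for Hilbert modular forms) $k_\tau\ge 0$, those terms contribute nonnegatively and can be dropped outright, giving $g-\sum_{\tau\in\Sigma}k_\tau\deg_\tau H_1\le g-\sum_{\tau\in I}k_\tau(1-\epsilon)$ directly. Your concern about possibly negative $k_\tau$ for $\tau\notin I$ is reasonable in principle, but after fixing the sign the correct remedy is not to aim for the first displayed inequality verbatim; rather, absorb the extra $\epsilon\sum_{\tau\notin I}|k_\tau|$ into the initial choice of $\epsilon$ in Section~\ref{subsec:norm} (replace $\epsilon\sum_{\tau\in I}k_\tau$ there by $\epsilon\sum_{\tau\in\Sigma}|k_\tau|$), so that the ``in particular'' conclusion---which is the only part invoked in Proposition~\ref{prop:norm}---still follows. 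Your derivation of the ``in particular'' from the first inequality is correct.
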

\begin{proof}
Recall that $U_p$ is defined by
\[
H^0(U_p(\cV),\omega^{\underline{k}}) \rightarrow H^0(p_1^{-1}(\cV),p_2^\ast\omega^{\underline{k}}) \xrightarrow{\pi^\ast} H^0((p_1^{-1}(\cV)),p_1^\ast\omega^{\underline{k}}) \xrightarrow{\frac1{p^g}\Tr_{p_1}} H^0(\cV,\omega^{\underline{k}}).
\]
Let $y\in \cV$ and $y_1\in U_p^{sp}(y)$.
Then 
\begin{align*}
|(U_p^{sp}h)(y)| = |\frac1{p^g} (\pi^\ast h)(y_1)| = p^{g -\sum_{\tau\in\Sigma}k_\tau\deg_\tau H_1} |h(y_1)|.
\end{align*}
By assumption, $y_1\in \cV\subseteq S_{I,0}(\delta)$, i.e., $\sum_{\tau\in I}\tilde \deg_\tau y_1 \leq \sum_{\tau \in I} \tilde x_I +\delta, \tilde\deg_\tau y_1 \geq \tilde x_I -\delta$.
Hence by our choice of $\delta$, 
we have 
\[
|\deg_\tau y_1 - x_{I,\tau}| < \epsilon,
\]
namely
\[
|\deg_\tau H_1 - x_{I^c,\tau}| < \epsilon.
\]
Then
\[
g -\sum_{\tau\in\Sigma}k_\tau\deg_\tau H_1 
\leq g - \sum_{\tau\in I} k_\tau (1-\epsilon).
\]

\end{proof}

\begin{lemma} \label{lem:F+f}
For $1\leq j \leq m$, 
$f_m - (\frac{U_p^{sp}}{a_p})^j f_m = F_j$ on $S_{I,j}(\delta)\setminus S_{I,m}(\delta')$.
\end{lemma}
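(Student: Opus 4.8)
The plan is to prove the identity $f_m - (\frac{U_p^{sp}}{a_p})^j f_m = F_j$ on $S_{I,j}(\delta)\setminus S_{I,m}(\delta')$ by induction on $j$, using the decomposition $U_p = U_p^{sp} + U_p^{nsp}$ together with the eigenform relation $U_p f = a_p f$. First I would record what we know about $f_m$: it is an extension of $f$ to $S_{I,0}(\delta)\setminus S_{I,m}(\delta')$ obtained by applying powers of $\frac{U_p}{a_p}$, so in particular $U_p f_m = a_p f_m$ wherever both sides are defined (this is because $f_m$ agrees with $f$ on a strict neighborhood of $\underline{\deg}^{-1}x_I$ and $f$ is a $U_p$-eigenform, and the eigenform relation propagates under the analytic continuation; alternatively one reproves it directly from the construction $f_m = (\frac{U_p}{a_p})^m f$ on the relevant piece). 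On $S_{I,j}(\delta)$ the operator $U_p^{sp}$ is the single-valued map $y\mapsto y_1$ landing in $S_{I,0}(\delta)$, while $U_p^{nsp}$ collects the remaining points of $U_p(y)$; by the remark after Lemma~\ref{lem:deg_nsp}, when $I\neq\varnothing$ the special point $y_1$ is unique, so $U_p^{sp}$ is well-defined, and by Lemma~\ref{lem:deg_nsp}(ii) the image of $U_p^{nsp}$ lands in $\cU_\varnothing(\epsilon)$ where $f$ (hence $f_m$, by compatibility of the extensions) is already defined.

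For the base case $j=1$: on $S_{I,1}(\delta)\setminus S_{I,m}(\delta')$ we have $U_p f_m = a_p f_m$, i.e. $U_p^{sp} f_m + U_p^{nsp} f_m = a_p f_m$, which rearranges to
\[
f_m - \tfrac{U_p^{sp}}{a_p} f_m = \tfrac{1}{a_p}U_p^{nsp} f_m = \tfrac{1}{a_p}U_p^{nsp}(U_p^{sp})^0 f_m = F_1,
\]
matching the definition $F_m = \sum_{j=0}^{m-1}(\frac{1}{a_p})^{j+1}U_p^{nsp}(U_p^{sp})^j f$ with $m=1$ (here I use that $f_m = f$ on the neighborhood of $\underline{\deg}^{-1}x_I$ used to define $U_p^{nsp}(U_p^{sp})^j f$, or more carefully that $U_p^{nsp}(U_p^{sp})^jf_m = U_p^{nsp}(U_p^{sp})^jf$ wherever the latter is defined). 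For the inductive step, suppose the identity holds for $j$ on $S_{I,j}(\delta)\setminus S_{I,m}(\delta')$. Apply $\frac{U_p^{sp}}{a_p}$ to both sides: since $U_p^{sp}$ maps $S_{I,j+1}(\delta)$ into $S_{I,j}(\delta)$, on $S_{I,j+1}(\delta)\setminus S_{I,m}(\delta')$ we get
\[
\tfrac{U_p^{sp}}{a_p}f_m - \bigl(\tfrac{U_p^{sp}}{a_p}\bigr)^{j+1} f_m = \tfrac{U_p^{sp}}{a_p}F_j = \sum_{i=0}^{j-1}\bigl(\tfrac{1}{a_p}\bigr)^{i+1}\tfrac{U_p^{sp}}{a_p}U_p^{nsp}(U_p^{sp})^i f = \sum_{i=1}^{j}\bigl(\tfrac{1}{a_p}\bigr)^{i+1}U_p^{nsp}(U_p^{sp})^i f,
\]
where the last step uses that $U_p^{sp}$ and $U_p^{nsp}$ commute appropriately along the tower (more precisely, $U_p^{sp}$ post-composed with $U_p^{nsp}(U_p^{sp})^i$ equals $U_p^{nsp}(U_p^{sp})^{i+1}$ as multivalued correspondences, since $(U_p^{sp})^{i+1}$ picks out the iterated special point). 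Then add the $j=1$ relation $f_m - \frac{U_p^{sp}}{a_p}f_m = \frac{1}{a_p}U_p^{nsp} f_m$, valid on $S_{I,j+1}(\delta)\setminus S_{I,m}(\delta')\subseteq S_{I,1}(\delta)\setminus S_{I,m}(\delta')$: the telescoping sum gives
\[
f_m - \bigl(\tfrac{U_p^{sp}}{a_p}\bigr)^{j+1}f_m = \sum_{i=0}^{j}\bigl(\tfrac{1}{a_p}\bigr)^{i+1}U_p^{nsp}(U_p^{sp})^i f = F_{j+1},
\]
completing the induction.

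The main obstacle I anticipate is purely bookkeeping rather than conceptual: one must be careful about the precise domains on which each operator and each function is defined, checking at every stage that $(U_p^{sp})^i$ maps $S_{I,j+1}(\delta)$ into $S_{I,j+1-i}(\delta)\supseteq S_{I,1}(\delta)$, that $U_p^{nsp}(U_p^{sp})^i$ then lands in a region ($\cU_\varnothing(\epsilon)$, via Lemma~\ref{lem:deg_nsp}(ii)) on which $f$ has already been extended in Section~\ref{subsec:vertex}, and that $f_m$ and $f$ agree on all the overlaps so that $U_p^{nsp}(U_p^{sp})^i f_m$ and $U_p^{nsp}(U_p^{sp})^i f$ coincide. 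The compatibility of the various extensions of $f$ — which was arranged via the admissible coverings $\{S_{I,0}(\delta)\setminus S_{I,m}(\delta'),\, S_{I,m}(\delta)\}$ in the previous subsection — is what makes all these identifications legitimate. Once the domains are pinned down, the algebra is just the rearrangement of the eigenform equation $U_p f = a_p f$ split along $U_p^{sp} + U_p^{nsp}$, iterated $j$ times.
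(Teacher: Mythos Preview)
Your proposal is correct and matches the paper's approach: the paper's proof just says ``a simple calculation using the fact that $U_p = U_p^{sp}+U_p^{nsp}$ yields the claimed equality'', and your induction on $j$ is exactly that calculation written out. One phrasing caveat: calling the step $\frac{U_p^{sp}}{a_p}\cdot U_p^{nsp}(U_p^{sp})^i f = \frac{1}{a_p}U_p^{nsp}(U_p^{sp})^{i+1}f$ a ``commutation'' is misleading --- in the paper's convention $U_p^{nsp}(U_p^{sp})^i$ denotes the correspondence that first iterates the special point $i$ times and then takes the non-special points, so applying $U_p^{sp}$ once more (as an operator on sections, i.e.\ pulling back along $y\mapsto y_1$) tautologically replaces $i$ by $i+1$; your parenthetical remark shows you understand this, so the argument goes through.
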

\begin{proof}
Recall that we have fixed $\delta'<\delta$, and $f_m$ is defined on $S_{I,0}(\delta)\setminus S_{I,m}(\delta')$. 
In particular, $(\frac{U_p^{sp}}{a_p})^j f_m$ is defined on $(U_p^{sp})^{-j}S_{I,0}(\delta)\setminus S_{I,m}(\delta') = S_{I,j}(\delta)\setminus S_{I,j+m}(\delta')$.

By definition, $F_j = \sum_{\ell=0}^{j-1} (\frac1{a_p})^{\ell+1} U_p^{nsp} (U_p^{sp})^\ell f$ on $S_{I,j}(\delta)$. 
Hence $F_j + (\frac{U_p^{sp}}{a_p})^{j}f_m $ is defined on $S_{I,j}(\delta)\setminus S_{I,m}(\delta')$.
A simple calculation using the fact that $U_p = U_p^{sp}+ U_p^{nsp}$ yields the claimed equality $F_j + (\frac{U_p^{sp}}{a_p})^{j}f_m = f_m$.
\end{proof} 

\begin{proof}[Proof of Proposition~\ref{prop:norm}]\hfill
\begin{enumerate}
    \item
Because $f$ is defined on the quasi-compact open $\cV_I(\delta)$, $|f|_{\cV_I(\delta)}$ is bounded.
Since $U_p$ is a compact operator, $|f_1|_{S_{I,0}(\delta)\setminus S_{I,1}(\delta)} \leq |\frac{U_p}{a_p} f|_{\cV_I(\delta)}$ is also bounded.
Similarly, $|f_1|_{S_{I,0}(\delta')\setminus S_{I,1}(\delta')}$ is bounded, and hence $|f_1|_{S_{I,0}(\delta)\setminus S_{I,1}(\delta')}$ is bounded.

We will show that $|f_m|_{S_{I,0}(\delta)\setminus S_{I,m}(\delta')}\leq \sup(|f_1|_{S_{I,0}(\delta)\setminus S_{I,1}(\delta')}, |F_1|_{S_{I,m}(\delta)})$ for all $m\geq1$.
Because $f_m$'s are compatible, it suffices to show that 
\[
|f_m|_{S_{I,m}(\delta)\setminus S_{I,m}(\delta')}\leq \sup(|f_1|_{S_{I,0}(\delta)\setminus S_{I,1}(\delta')}, |F_1|_{S_{I,m}(\delta)})
\] 
for all $m\geq1$.
We do this by induction on $m$.
By Lemma~\ref{lem:F+f}, $f_m - \frac{U_p^{sp}}{a_p} f_m = F_1$ on $S_{I,1}(\delta)\setminus S_{I,m}(\delta')$.
Then it suffices to show that 
\[
|\frac{U_p^{sp}}{a_p} f_m |_{S_{I,m}(\delta)\setminus S_{I,{m+1}}(\delta')} \leq \sup(|f_1|_{S_{I,0}(\delta)\setminus S_{I,1}(\delta')}, |F_1|_{S_{I,m}(\delta)}).
\]
By Lemma~\ref{lem:keynormest}, 
\begin{align*}
|\frac{U_p^{sp}}{a_p} f_m |_{S_{I,m}(\delta)\setminus S_{I,{m+1}}(\delta')} 
&\leq |f_{m}|_{S_{I,m-1}(\delta)\setminus 
S_{I,{m}}(\delta')}\\
&= |f_{m-1}|_{S_{I,m-1}(\delta)\setminus S_{I,{m}}(\delta')}
\end{align*}
Hence 
\[|\frac{U_p^{sp}}{a_p} f_m |_{S_{I,m}(\delta)\setminus S_{I,{m+1}}(\delta')} \leq \sup(|f_1|_{S_{I,0}(\delta)\setminus S_{I,1}(\delta')}, |F_1|_{S_{I,m}(\delta)})\]
by induction hypothesis.

As for $|F_m|_{S_{I,m}(\delta)}$, by Lemma~\ref{lem:keynormest},
\begin{align*}
|F_m|_{S_{I,m}(\delta)}
&\leq \sup_{0\leq j\leq m-1} |(\frac1{a_p})^{j+1} U_p^{nsp} (U_p^{sp})^jf|_{S_{I,m}(\delta)} \\
&= \sup_{0\leq j\leq m-1} |(\frac{U_p^{sp}}{a_p})^j F_1|_{S_{I,m}(\delta)}\\
&\leq \sup_{0\leq j\leq m-1} |F_1|_{S_{I,m-j}(\delta)} \\
&= |F_1|_{S_{I,1}(\delta)}.
\end{align*}

\item
By Lemma~\ref{lem:F+f} and Lemma~\ref{lem:keynormest},
\begin{align*}
|F_m-f_m|_{S_{I,m}(\delta)\setminus S_{I,m}(\delta')} 
&=|(\frac{U_p^{sp}}{a_p})^m f_m|_{S_{I,m}(\delta)\setminus S_{I,m}(\delta')} \\
&\leq p^{-m \mu}|f_m|_{S_{I,0}(\delta)\setminus S_{I,0}(\delta')} \\
&=p^{-m \mu}|f_0|_{S_{I,0}(\delta)\setminus S_{I,0}(\delta')} \\
&\to 0 \text{ as $m\to \infty$}.
\end{align*}

\item
By Lemma~\ref{lem:keynormest},
\begin{align*}
|F_{m+1}-F_m|_{S_{I,m+1}(\delta)} 
&=|(\frac1{a_p})^{m+1} U_p^{nsp}(U_p^{sp})^m f|_{S_{I,m+1}(\delta)}  \\
&=|(\frac{U_p^{sp}}{a_p})^m F_1|_{S_{I,m+1}(\delta)} \\ 
&\leq p^{-m \mu}|F_1|_{S_{I,1}(\delta)}  \\
&\rightarrow 0 \text{ as $m\to \infty$}.
\end{align*}
\end{enumerate}
\end{proof}

\subsection{Finishing the proof of Theorem~\ref{thm:partial_classicality}}
Following the paragraph just before Section~\ref{subsec:auto},  we assume that the overconvergent form $f$ is defined on a strict neighborhood of $\underline{\deg}^{-1}x_J$ for all $J\subsetneq I$.
We also assume that $f$ satisfies the small slope condition \eqref{small_slope}. 
Let $\epsilon$ be a small enough rational number as in Lemma \ref{lem:deg_nsp} \emph{ii.} such that $\epsilon<\frac12(p^{g-1}-p^{g-2}-\cdots-1)$, and that 
\begin{align*} 
\val_p (a_p) \leq \inf_{\tau\in I} k_\tau-g-\epsilon\sum_{\tau\in I}k_\tau.
\end{align*}
By Proposition~\ref{prop:an_vertices} we can extend $f$ to a strict neighborhood $S_{I,0}(\delta)$ of  $\underline{\deg}^{-1}x_I$ for any small enough rational number $\delta>0$.

Note that the vertices in $\cF_I$ are exactly the $x_J$'s with $J\subseteq I$, so we have extended $f$ to a strict neighborhood of the inverse image of $\underline{\deg}$ of all the vertices of $\cF_I$.
We will show that $f$ can be extended to a strict neighborhood $\cU$ of $\cY\cF_I$, again using the argument in Lemma~\ref{lem:loc_u} that $U_p$ strictly increases the sum of twisted directional degrees when the $\underline{\deg}$ is not one of the vertices of $[0,1]^g$.

Define a quasi-compact admissible open $$\cU = \{y\in \cY \colon \tilde\deg_\tau y \geq p^{g-2}+\cdots+1+\epsilon, \forall \tau\not\in I\}.$$
Recall that $\cY\cF_I = \{y\in \cY\colon \deg_\tau y = 1, \forall \tau \not\in I\}$. If $y\in \cY\cF_I$, then for $\tau\not\in I$, 
$$\tilde\deg_\tau y \geq p^{g-1} > p^{g-2}+\cdots +1+\epsilon.$$
Hence $\cU$ is a strict neighborhood of $\cY\cF_I$.
We have shown in the proof of Lemma~\ref{lem:loc_u} that the condition of $\cU$ implies that if $y\in \cU$ is such that $\underline{\deg}(y) = x_J$ for some $J\subseteq \Sigma$, then $J\subseteq I$.
Moreover, $\cU$ is $U_p$-stable because $U_p$ increases twisted directional degrees (Proposition~\ref{prop:twisted_deg_increase}(1)).

For each $J\subseteq I$, let $\cV_J$ be a strict neighborhood of $\underline{\deg}^{-1} x_J$ on which $f$ is defined, and we explicitly choose $\cV_J$ in the form 
$$\cV_J = \{y\in \cY\colon \deg_\tau y \leq \epsilon_\tau \text{ if } \tau \in J, \deg_\tau y \geq 1-\epsilon_\tau, \text{ if } \tau \not\in J\},$$
for some rational $\epsilon_\tau >0$.
Let $\epsilon'_\tau<\epsilon_\tau$ be a rational number, and define the quasi-compact admissible open
\[
\cV = \left\{y\in \cY\colon 
\begin{array}{l}
\deg_\tau y \geq \epsilon'_\tau \text{ if } \tau \in I, \deg_\tau y \leq 1-\epsilon'_\tau, \text{ if } \tau \not\in I;\\
 \tilde\deg_\tau y \geq p^{g-2}+\cdots+1+\epsilon, \forall \tau\not\in I
\end{array}
\right\}.\]
We have an admissible cover $\cU=\bigcup_{J\subseteq I}\cV_J \cup \cV$.

Since $\cV$ is disjoint from $\underline{\deg}^{-1} x_J$ for any $J\subseteq I$ from its definition,  $U_p$ strictly increases $\sum_{\tau\in\Sigma} \tilde\deg_\tau$ on $\cV$ by Proposition~\ref{prop:twisted_deg_increase}(2).
Using the Maximum Modulus Principle, the quasi-compactness of $\cV$ implies that there is a positive lower bound for the increase of $\sum_{\tau\in \Sigma}\tilde\deg_\tau$ under $U_p$ on $\cV$.
Because $\cU$ is $U_p$-stable, there exists $M>0$ such that $U_p^M \cV\subseteq \bigcup_{J\subseteq I}\cV_J$.
Since $f$ is defined on $\bigcup_{J\subseteq I}\cV_J$, we may define $f$ on $\cV$ by $(\frac{U_p}{a_p})^Mf$.
On the intersection $(\bigcup_{J\subseteq I}\cV_J)\cap \cV$, the definitions of $f$ coincide since $a_p$ is the $U_p$-eigenvalue of $f$.
We can then define $f$ on the whole $\cU$ through the admissible cover $\cU=\bigcup_{J\subseteq I}\cV_J \cup \cV$.


\bibliographystyle{amsalpha}
\bibliography{bibliography}

\end{document}